\newcommand{\eqarrow}[2][$\leftrightarrow$]{
    \begin{tikzpicture}
        \begin{scope}[shift={(1,0)}]
            #2
        \end{scope}
        \node at (0,1)(a){#1};
    \end{tikzpicture}
}
\newcommand{\unit}{
\node[mult] at (1,1) (u){};
\node at (1,0) (a){};
\draw[common] (u) to (a);
}
\newcommand{\counit}{
\node at (1,1) (u){};
\node[mult] at (1,0) (a){};
\draw[common] (u) to (a);
}
\newcommand{\pairing}{
	\node [mult] at (1,1) (m){};
    \node at (0,2) (a){};
    \node at (2,2) (b){};
    \node[mult] at (1,0) (c){};
    \draw[common] (a) to (m);
    \draw[common](b) to (m);
    \draw[common](m) -- (c);
}
\newcommand{\copairing}{
	\node [delt] at (1,1)(d){};
	
	\node[delt] at (1,2)(a){};
	\node at (0,0)(b){};
	\node at (2,0)(c){};
	\draw[common](a) to [out=270,in=90](d);
	\draw[common](d) to [out=205,in=60](b);
	\draw[common](d) to [out=-25,in=120](c);
}
\newcommand{\qcomwcounit}{
    \node at (0,3) (a){};
    \node at (2,3)(b){};
    \node[mult] at (1,0)(c){};
    \node at (3,0)(f){};
    \node [mult] at (1,1) (m){};
    \node [delt] at (2,2) (d){};
    \draw[common](a)[out=270,in=135] to (m);
    \draw[common](m) to (c);
    \draw[common](b) to (d);
    \draw[common](d) to (m);
    \draw[common](d)[out=-45,in=90] to (f);
}
\newcommand{\ncomwcounit}{
    \node at (0,3)(a){};
    \node at (2,3)(b){};
    \node[mult] at (0,0)(c){};
    \node at (2,0)(f){};
    \node [mult] at (1,2)(m){};
    \node [delt] at (1,1)(d){};
    \draw[common](a) to (m);
    \draw[common](b) to (m);
    \draw[common](m) to (d);
    \draw[common](d) to (c);
    \draw[common](d) to (f);
}
\newcommand{\multi}{
	\node [mult] at (1,1) (m){};
    \node at (0,2) (a){};
    \node at (2,2) (b){};
    \node at (1,0) (c){};
    \draw[common] (a) to (m);
    \draw[common](b) to (m);
    \draw[common](m) -- (c);
}
\newcommand{\bcom}{
    \node at (3,6) (a){};
	\node at (4,6) (b){};
    \node at (4,0) (c){};
    \node[mult] at (2,4) (upm){};
	\node[mult] at (3,2) (lom){};
	\node[delt] at (3,1) (d){};
	%
	\draw[common] (b)[out=270,in=45] to (lom);
	\draw[common] (lom) to (d);
	\draw[common] (d) to (c);
	
	\draw[common] (a)[out=270,in=45] to (upm);
	\draw[common] (upm) to ($(upm)+(0,-1)$) to (lom);
	\draw[common] (d)[out=225,in=0] to ($(d) - (1,.5)$)[out=180,in=225]
	to (1,5)[out = 45,in=0] to (1,5.5)[out=180,in=135] to (1,5)[out=-45] to (upm);
}
\newcommand{\ccom}{
    \node at (1,4) (b){};
    \node at (2.5,4) (c){};
    \node at (1,0) (a){};
    \node [delt] at (1,3) (d){};
    \node [mult] at (2,2) (upm){};
    \node [mult] at (1,1) (lom){};
    \draw[common] (b) to (d);
    \draw[common] (d) to (upm);
	\draw[common] (d)[out=225,in=135] to (lom);
	\draw[common] (lom) to (a);
	\draw[common] (c)[out=270,in=45] to (upm);
	\draw[common] (upm) to (lom);
}
\newcommand{\dcom}{
	\node at (2,4) (c){};
	\node at (0,4) (b){};
	\node at (2,0) (a){};
	\node[delt] at (2,3) (d){};
	\node[mult] at (1,2) (upm){};
	\node[mult] at (2,1) (lom){};
	\draw[common](b)[out=270,in=135] to (upm);
	\draw[common](upm) to (lom);
	\draw[common](lom) to (a);
	\draw[common](c) to (d);
	\draw[common](d) to (upm);
	\draw[common](d)[out=-45,in=45] to (lom);
}
\newcommand{\edelt}{
	\node [delt] at (1,1)(d){};
	
	\node at (1,2)(a){};
	\node at (0,0)(b){};
	\node at (2,0)(c){};
	\draw[common](a) to [out=270,in=90](d);
	\draw[common](d) to [out=205,in=60](b);
	\draw[common](d) to [out=-25,in=120](c);
}
\newcommand{\fcom}{
    \node at (4,5) (b){};
	\node at (4,0) (c){};
	\node at (3,0) (a){};
    \node[mult] at (3,3) (m){};
	\node[delt] at (2,1) (lod){};
	\node[delt] at (3,2) (upd){};
	%
	\draw[common] (b)[out=270,in=45] to (m);
	\draw[common] (m) to (upd);
	\draw[common] (upd) to ($(upd) - (1,.5)$) to (lod);
	\draw[common] (lod)[out=-45,in=90] to (a);
	\draw[common] (upd)[out=-45,in=90] to (c);
	\draw[common] (lod)[out=225,in=0] to ($(lod) - (1,.5)$)[out=180,in=225] to 
	(1,4)[out=45,in=0] to (1,4.5)[out=180,in=225] to (1,4)[out=-45,in=135] to (m);
}
\newcommand{\gcom}{
    \node at (1,4) (b){};
    \node at (1,0) (a){};
    \node at (3,0) (c){};
    \node [delt] at (1,3) (upd){};
    \node [delt] at (2,2) (lod){};
    \node [mult] at (1,1) (m){};
    \draw[common](b) -- (upd);
    \draw[common](upd)[out=225,in=135] to (m);
    \draw[common](m) -- (a);
    
    \draw[common](upd) to (lod);
    \draw[common](lod) to (m);
    \draw[common](lod)[out=-45,in=90] to (c);
}
\newcommand{\hcom}{
    \node at (2,4) (b){};
    \node at (0,0) (a){};
    \node at (2,0) (c){};
    \node [delt] at (2,3) (upd){};
    \node [delt] at (1,2) (lod){};
    \node [mult] at (2,1) (m){};
    
    \draw[common](b) -- (upd);
    \draw[common](upd) -- (lod);
    \draw[common](lod)[out=225,in=90] to (a);
    \draw[common](upd)[out=-45,in=45] to (m);
    \draw[common](lod) -- (m);
    \draw[common](m) to (c);
}
\newcommand{\icom}{
	\node at (0,3) (b){};
	\node at (2,3) (c){};
	\node at (4,3) (f){};
	\node at (2,0) (a){};
	\node[mult] at (1,2) (upm){};
	\node[mult] at (2,1) (lom){};
	\draw[common] (b) to (upm);
	\draw[common] (c) to (upm);
	\draw[common] (upm)to (lom);
	\draw[common] (lom) to (a);
	\draw[common] (f) to (lom);
}
\newcommand{\jcom}{
	\node at (0,3) (b){};
	\node at (2,3) (c){};
	\node at (4,3) (f){};
	\node at (2,0) (a){};
	\node[mult] at (3,2) (upm){};
	\node[mult] at (2,1) (lom){};
	\draw[common] (b) to (lom);
	\draw[common] (c) to (upm);
	\draw[common] (upm)to (lom);
	\draw[common] (lom) to (a);
	\draw[common] (f) to (upm);
}
\newcommand{\kcom}{
	\node at (2,3) (c){};
	\node at (0,0) (b){};
	\node at (2,0) (a){};
	\node at (4,0) (f){};
	\node [delt] at (1,1)(lod){};
	\node [delt] at (2,2)(upd){};
	\draw[common] (lod) to (b);
	\draw[common] (lod) to (a);
	\draw[common] (c) to (upd);
	\draw[common] (upd)[out=225,in=90] to (lod);
	\draw[common] (upd)[out=-45,in=90] to (f);
}
\newcommand{\lcom}{
	\node at (2,3) (c){};
	\node at (0,0) (b){};
	\node at (2,0) (a){};
	\node at (4,0) (f){};
	\node [delt] at (3,1)(lod){};
	\node [delt] at (2,2)(upd){};
	\draw[common] (upd)[out=225,in=90] to (b);
	\draw[common] (c) to (upd);
	\draw[common] (upd)[out=-45,in=90] to (lod);
	\draw[common] (lod)[out=225,in=90] to (a);
	\draw[common] (lod)[out=-45,in=90] to (f);
}
\newcommand{\mcom}{
	\node at (1,3) (c){};
	\node at (3,3) (f){};   
	\node at (0,0) (b){};
	\node at (2,0) (a){};
	\node [mult] at (2,1) (m){};
	\node [delt] at (1,2) (d){};
	\draw[common](c) to (d);
	\draw[common](d) to (m);
	\draw[common](m) to (a);
	
	\draw[common](d)[out=225,in=90] to (b);
	\draw[common](f)[out=270,in=45] to (m);
}
\newcommand{\ncom}{
    \node at (0,3)(a){};
    \node at (2,3)(b){};
    \node at (0,0)(c){};
    \node at (2,0)(f){};
    \node [mult] at (1,2)(m){};
    \node [delt] at (1,1)(d){};
    \draw[common](a) to (m);
    \draw[common](b) to (m);
    \draw[common](m) to (d);
    \draw[common](d) to (c);
    \draw[common](d) to (f);
}
\newcommand{\ocom}{
    \node at (1,4)(f){};
    \node at (2,4)(b){};
    \node at (1,0)(a){};
    \node at (2,0)(c){};
	\node [delt] at (1,3)(d){};
    \node [mult] at (1,1)(m){};
    
    \draw[common](b) to [out=270,in=30](m);
    \draw[common](f) to [out=270,in=90](d);
    \draw[common](d) to [out=200,in=135](m);
    
  \draw[common](m) to [out=270,in=90](a);
  \draw[common](d) to [out=-20, in=90](c);
}
\newcommand{\pcom}{
    \node at (3,4) (a){};
    \node at (1,4) (c){};
    \node at (0,0) (b){};
    \node at (2,0) (f){};
    \node[delt] at (1,3) (d){};
    \node[mult] at (2,1) (m){};
    \draw[common] (c) to (d);
    \draw[common] (d)[out=225,in=90] to (b);
    \draw[common] (d)[out=-45,in=45] to (m);
    \draw[common] (m) to (f);
    \draw[common] (a)[out=270,in=135] to (m);
}
\newcommand{\qcom}{
    \node at (0,3) (a){};
    \node at (2,3)(b){};
    \node at (1,0)(c){};
    \node at (3,0)(f){};
    \node [mult] at (1,1) (m){};
    \node [delt] at (2,2) (d){};
    \draw[common](a)[out=270,in=135] to (m);
    \draw[common](m) to (c);
    \draw[common](b) to (d);
    \draw[common](d) to (m);
    \draw[common](d)[out=-45,in=90] to (f);
}
\newcommand{\rcom}{
	\node at (0,4) (a){};
	\node at (2,4) (c){};
	\node at (1,0) (f){};
	\node at (3,0) (b){};
	\node[delt] at (2,3) (d){};
	\node[mult] at (1,1) (m){};
	\draw[common] (a)[out=270,in=135] to (m);
	\draw[common] (c) to (d);
	\draw[common] (m) to (f);
	\draw[common] (d)[out=-45,in=45] to (m);
	\draw[common] (d)[out=225,in=90] to (b);
}
\newcommand{\scom}{
    \node at (1,6) (c){};
    \node at (0,4) (b){};
    \node at (0,1) (a){};
    \node[mult] at (2,5) (upm){};
	\node[mult] at (1,3) (lom){};
	\node[delt] at (1,2) (d){};
	%
	\draw[common] (b) to (lom);
	\draw[common] (lom) to (d);
	\draw[common] (d) to (a);
	\draw[common] (c) to (upm);
	\draw[common] (upm) to (2,4) to (lom);
	\draw[common](d) to (2.75,1)[out=-45,in=0] to (2.75,.5)[out=180,in=225] to (2.75,1.5)
	[out=45,in=45] to (upm);
}
\newcommand{\tcom}{
    \node at (0,5) (b){};
    \node at (0,2) (a){};
    \node at (1,0) (c){};
    \node [mult] at (1,4) (m){};
    \node [delt] at (1,3) (upd){};
    \node [delt] at (2,2) (lod){};
    \draw[common](b) to (m);
    \draw[common](m) to (upd);
    \draw[common](upd) to (a);
    \draw[common](upd) to (lod);
    \draw[common](lod) to (c);
	
	\draw[common](lod) to (2.75,1)[out=-45,in=0] to (2.75,.5)[out=180,in=225] to (2.75,1.5)
	[out=45,in=45] to (m);
}
\newcommand{\actred}{
	\node [mult] at (1,1) (m){};
    \node at (0,2) (a){};
    \node at (2,2) (b){};
    \node at (1,0) (c){};
    \draw[dpoint] (a) to (m);
    \draw[common](b) to (m);
    \draw[dpoint](m) -- (c);
}
\newcommand{\actdef}{
	\node [mult] at (1,1) (m){};
    \node at (0,2) (a){};
    \node at (2,2) (b){};
    \node at (1,0) (c){};
    \draw[defect] (a) to (m);
    \draw[dpoint](b) to (m);
    \draw[dpoint](m) -- (c);
}
\newcommand{\cred}{
    \node at (1,4) (b){};
    \node at (2.5,4) (c){};
    \node at (1,0) (a){};
    \node [delt] at (1,3) (d){};
    \node [mult] at (2,2) (upm){};
    \node [mult] at (1,1) (lom){};
    \draw[dpoint] (b) to (d);
    \draw[common] (d) to (upm);
	\draw[dpoint] (d)[out=225,in=135] to (lom);
	\draw[dpoint] (lom) to (a);
	\draw[common] (c)[out=270,in=45] to (upm);
	\draw[common] (upm) to (lom);
}
\newcommand{\ddef}{
	\node at (2,4) (c){};
	\node at (0,4) (b){};
	\node at (2,0) (a){};
	\node[delt] at (2,3) (d){};
	\node[mult] at (1,2) (upm){};
	\node[mult] at (2,1) (lom){};
	\draw[defect](b)[out=270,in=135] to (upm);
	\draw[dpoint](upm) to (lom);
	\draw[dpoint](lom) to (a);
	\draw[dpoint](c) to (d);
	\draw[dpoint](d) to (upm);
	\draw[common](d)[out=-45,in=45] to (lom);
}
\newcommand{\dred}{
	\node at (2,4) (c){};
	\node at (0,4) (b){};
	\node at (2,0) (a){};
	\node[delt] at (2,3) (d){};
	\node[mult] at (1,2) (upm){};
	\node[mult] at (2,1) (lom){};
	\draw[dpoint](b)[out=270,in=135] to (upm);
	\draw[dpoint](upm) to (lom);
	\draw[dpoint](lom) to (a);
	\draw[common](c) to (d);
	\draw[common](d) to (upm);
	\draw[common](d)[out=-45,in=45] to (lom);
}
\newcommand{\edefdelt}{
	\node [delt] at (1,1)(d){};
	
	\node at (1,2)(a){};
	\node at (0,0)(b){};
	\node at (2,0)(c){};
	\draw[dpoint](a) to [out=270,in=90](d);
	\draw[defect](d) to [out=205,in=60](b);
	\draw[dpoint](d) to [out=-25,in=120](c);
}
\newcommand{\eredelt}{
		\node [delt] at (1,1)(d){};
        
        \node at (1,2)(a){};
        \node at (0,0)(b){};
        \node at (2,0)(c){};
        \draw[dpoint](a) to [out=270,in=90](d);
        \draw[dpoint](d) to [out=205,in=60](b);
        \draw[common](d) to [out=-25,in=120](c);
}
\newcommand{\gred}{
    \node at (1,4) (b){};
    \node at (1,0) (a){};
    \node at (3,0) (c){};
    \node [delt] at (1,3) (upd){};
    \node [delt] at (2,2) (lod){};
    \node [mult] at (1,1) (m){};
    \draw[dpoint](b) -- (upd);
    \draw[dpoint](upd)[out=225,in=135] to (m);
    \draw[dpoint](m) -- (a);
    
    \draw[common](upd) to (lod);
    \draw[common](lod) to (m);
    \draw[common](lod)[out=-45,in=90] to (c);
}
\newcommand{\hdef}{
    \node at (2,4) (b){};
    \node at (0,0) (a){};
    \node at (2,0) (c){};
    \node [delt] at (2,3) (upd){};
    \node [delt] at (1,2) (lod){};
    \node [mult] at (2,1) (m){};
    
    \draw[dpoint](b) -- (upd);
    \draw[dpoint](upd) -- (lod);
    \draw[defect](lod)[out=225,in=90] to (a);
    \draw[common](upd)[out=-45,in=45] to (m);
    \draw[dpoint](lod) -- (m);
    \draw[dpoint](m) to (c);
}
\newcommand{\hred}{
    \node at (2,4) (b){};
    \node at (0,0) (a){};
    \node at (2,0) (c){};
    \node [delt] at (2,3) (upd){};
    \node [delt] at (1,2) (lod){};
    \node [mult] at (2,1) (m){};
    
    \draw[dpoint](b) -- (upd);
    \draw[dpoint](upd) -- (lod);
    \draw[dpoint](lod)[out=225,in=90] to (a);
    \draw[common](upd)[out=-45,in=45] to (m);
    \draw[common](lod) -- (m);
    \draw[common](m) to (c);
}
\newcommand{\idef}{
	\node at (0,3) (b){};
	\node at (2,3) (c){};
	\node at (4,3) (f){};
	\node at (2,0) (a){};
	\node[mult] at (1,2) (upm){};
	\node[mult] at (2,1) (lom){};
	\draw[defect] (b) to (upm);
	\draw[dpoint] (c) to (upm);
	\draw[dpoint] (upm)to (lom);
	\draw[dpoint] (lom) to (a);
	\draw[common] (f) to (lom);
}
\newcommand{\ired}{
	\node at (0,3) (b){};
	\node at (2,3) (c){};
	\node at (4,3) (f){};
	\node at (2,0) (a){};
	\node[mult] at (1,2) (upm){};
	\node[mult] at (2,1) (lom){};
	\draw[dpoint] (b) to (upm);
	\draw[common] (c) to (upm);
	\draw[dpoint] (upm) to (lom);
	\draw[dpoint] (lom) to (a);
	\draw[common] (f) to (lom);
}
\newcommand{\jdef}{
	\node at (0,3) (b){};
	\node at (2,3) (c){};
	\node at (4,3) (f){};
	\node at (2,0) (a){};
	\node[mult] at (3,2) (upm){};
	\node[mult] at (2,1) (lom){};
	\draw[defect] (b) to (lom);
	\draw[dpoint] (c) to (upm);
	\draw[dpoint] (upm)to (lom);
	\draw[dpoint] (lom) to (a);
	\draw[common] (f) to (upm);
}
\newcommand{\jred}{
	\node at (0,3) (b){};
	\node at (2,3) (c){};
	\node at (4,3) (f){};
	\node at (2,0) (a){};
	\node[mult] at (3,2) (upm){};
	\node[mult] at (2,1) (lom){};
	\draw[dpoint] (b) to (lom);
	\draw[common] (c) to (upm);
	\draw[common] (upm)[out=225,in=45] to (lom);
	\draw[dpoint] (lom) to (a);
	\draw[common] (f) to (upm);
}
\newcommand{\kdef}{
	\node at (2,3) (c){};
	\node at (0,0) (b){};
	\node at (2,0) (a){};
	\node at (4,0) (f){};
	\node [delt] at (1,1)(lod){};
	\node [delt] at (2,2)(upd){};
	\draw[defect] (lod) to (b);
	\draw[dpoint] (lod) to (a);
	\draw[dpoint] (c) to (upd);
	\draw[dpoint] (upd)[out=225,in=90] to (lod);
	\draw[common] (upd)[out=-45,in=90] to (f);
}
\newcommand{\kred}{
	\node at (2,3) (c){};
	\node at (0,0) (b){};
	\node at (2,0) (a){};
	\node at (4,0) (f){};
	\node [delt] at (1,1)(lod){};
	\node [delt] at (2,2)(upd){};
	\draw[dpoint] (lod) to (b);
	\draw[common] (lod) to (a);
	\draw[dpoint] (c) to (upd);
	\draw[dpoint] (upd)[out=225,in=90] to (lod);
	\draw[common] (upd)[out=-45,in=90] to (f);
}
\newcommand{\ldef}{
	\node at (2,3) (c){};
	\node at (0,0) (b){};
	\node at (2,0) (a){};
	\node at (4,0) (f){};
	\node [delt] at (3,1)(lod){};
	\node [delt] at (2,2)(upd){};
	\draw[defect] (upd)[out=225,in=90] to (b);
	\draw[dpoint] (c) to (upd);
	\draw[dpoint] (upd)[out=-45,in=90] to (lod);
	\draw[dpoint] (lod)[out=225,in=90] to (a);
	\draw[common] (lod)[out=-45,in=90] to (f);
}
\newcommand{\lred}{
	\node at (2,3) (c){};
	\node at (0,0) (b){};
	\node at (2,0) (a){};
	\node at (4,0) (f){};
	\node [delt] at (3,1)(lod){};
	\node [delt] at (2,2)(upd){};
	\draw[dpoint] (upd)[out=225,in=90] to (b);
	\draw[dpoint] (c) to (upd);
	\draw[common] (upd)[out=-45,in=90] to (lod);
	\draw[common] (lod)[out=225,in=90] to (a);
	\draw[common] (lod)[out=-45,in=90] to (f);
}
\newcommand{\mdef}{
	\node at (1,3) (c){};
	\node at (3,3) (f){};   
	\node at (0,0) (b){};
	\node at (2,0) (a){};
	\node [mult] at (2,1) (m){};
	\node [delt] at (1,2) (d){};
	\draw[dpoint](c) to (d);
	\draw[dpoint](d) to (m);
	\draw[dpoint](m) to (a);
	
	\draw[defect](d)[out=225,in=90] to (b);
	\draw[common](f)[out=270,in=45] to (m);
}
\newcommand{\mred}{
	\node at (1,3) (c){};
	\node at (3,3) (f){};   
	\node at (0,0) (b){};
	\node at (2,0) (a){};
	\node [mult] at (2,1) (m){};
	\node [delt] at (1,2) (d){};
	\draw[dpoint](c) to (d);
	\draw[dpoint](d)[out=225,in=90] to (b);
	
	\draw[common](d) to (m);
	\draw[common](m) to (a);
	\draw[common](f)[out=270,in=45] to (m);
}
\newcommand{\ndefa}{
    \node at (0,3)(a){};
    \node at (2,3)(b){};
    \node at (0,0)(c){};
    \node at (2,0)(f){};
    \node [mult] at (1,2)(m){};
    \node [delt] at (1,1)(d){};
    \draw[dpoint](a) to (m);
    \draw[common](b) to (m);
    \draw[dpoint](m) to (d);
    \draw[defect](d) to (c);
    \draw[dpoint](d) to (f);
}
\newcommand{\ndefb}{
    \node at (0,3)(a){};
    \node at (2,3)(b){};
    \node at (0,0)(c){};
    \node at (2,0)(f){};
    \node [mult] at (1,2)(m){};
    \node [delt] at (1,1)(d){};
    \draw[defect](a) to (m);
    \draw[dpoint](b) to (m);
    \draw[dpoint](m) to (d);
    \draw[dpoint](d) to (c);
    \draw[common](d) to (f);
}
\newcommand{\nred}{
    \node at (0,3)(a){};
    \node at (2,3)(b){};
    \node at (0,0)(c){};
    \node at (2,0)(f){};
    \node [mult] at (1,2)(m){};
    \node [delt] at (1,1)(d){};
    \draw[dpoint](a) to (m);
    \draw[common](b) to (m);
    \draw[dpoint](m) to (d);
    \draw[dpoint](d) to (c);
    \draw[common](d) to (f);
}
\newcommand{\odef}{
    \node at (1,4)(f){};
    \node at (2,4)(b){};
    \node at (1,0)(a){};
    \node at (2,0)(c){};
	\node [delt] at (1,3)(d){};
    \node [mult] at (1,1)(m){};
    
    \draw[dpoint](b) to [out=270,in=30](m);
    \draw[dpoint](f) to [out=270,in=90](d);
    \draw[defect](d) to [out=200,in=135](m);
    
  \draw[dpoint](m) to [out=270,in=90](a);
  \draw[dpoint](d) to [out=-20, in=90](c);
}
\newcommand{\ored}{
    \node at (1,4)(f){};
    \node at (2,4)(b){};
    \node at (1,0)(a){};
    \node at (2,0)(c){};
	\node [delt] at (1,3)(d){};
    \node [mult] at (1,1)(m){};
    
    \draw[common](b) to [out=270,in=30](m);
    \draw[dpoint](f) to [out=270,in=90](d);
    \draw[dpoint](d) to [out=200,in=135](m);
    
  \draw[dpoint](m) to [out=270,in=90](a);
  \draw[common](d) to [out=-20, in=90](c);
}
\newcommand{\preda}{
	\node at (3,4) (a){};
	\node at (1,4) (c){};
	\node at (0,0) (b){};
	\node at (2,0) (f){};
	\node[delt] at (1,3) (d){};
	\node[mult] at (2,1) (m){};
	\draw[dpoint] (c) to (d);
	\draw[dpoint] (d)[out=225,in=90] to (b);
	\draw[common] (d)[out=-45,in=45] to (m);
	\draw[common] (m) to (f);
	\draw[common] (a)[out=270,in=135] to (m);
}
\newcommand{\qdef}{
    \node at (0,3) (a){};
    \node at (2,3)(b){};
    \node at (1,0)(c){};
    \node at (3,0)(f){};
    \node [mult] at (1,1) (m){};
    \node [delt] at (2,2) (d){};
    \draw[defect](a)[out=270,in=135] to (m);
    \draw[dpoint](m) to (c);
    \draw[dpoint](b) to (d);
    \draw[dpoint](d) to (m);
    \draw[common](d)[out=-45,in=90] to (f);
}
\newcommand{\qred}{
    \node at (0,3) (a){};
    \node at (2,3)(b){};
    \node at (1,0)(c){};
    \node at (3,0)(f){};
    \node [mult] at (1,1) (m){};
    \node [delt] at (2,2) (d){};
    \draw[dpoint](a)[out=270,in=135] to (m);
    \draw[dpoint](m) to (c);
    \draw[common](b) to (d);
    \draw[common](d) to (m);
    \draw[common](d)[out=-45,in=90] to (f);
}
\newcommand{\rred}{
	\node at (0,4) (c){};
	\node at (2,4) (a){};
	\node at (3,0) (f){};
	\node at (1,0) (b){};
	\node[delt] at (2,3) (d){};
	\node[mult] at (1,1) (m){};
	\draw[common] (a) to (d);
	\draw[common] (d)[out=225,in=90] to (f);
	\draw[common] (d)[out=-45,in=45] to (m);
	\draw[dpoint] (m) to (b);
	\draw[dpoint] (c)[out=270,in=135] to (m);
}
\newcommand{\sred}{
    \node at (1,6) (c){};
    \node at (0,4) (b){};
    \node at (0,1) (a){};
    \node[mult] at (2,5) (upm){};
	\node[mult] at (1,3) (lom){};
	\node[delt] at (1,2) (d){};
	%
	\draw[dpoint] (b) to (lom);
	\draw[dpoint] (lom) to (d);
	\draw[dpoint] (d) to (a);
	\draw[common] (c) to (upm);
	\draw[common] (upm) to (2,4) to (lom);
	\draw[common](d) to (2.75,1)[out=-45,in=0] to (2.75,.5)[out=180,in=225] to (2.75,1.5)
	[out=45,in=45] to (upm);
}
\newcommand{\sdef}{
    \node at (1,6) (c){};
    \node at (0,4) (b){};
    \node at (0,1) (a){};
    \node[mult] at (2,5) (upm){};
	\node[mult] at (1,3) (lom){};
	\node[delt] at (1,2) (d){};
	%
	\draw[defect] (b) to (lom);
	\draw[dpoint] (lom) to (d);
	\draw[dpoint] (d) to (a);
	\draw[dpoint] (c) to (upm);
	\draw[dpoint] (upm) to (2,4) to (lom);
	
	\draw[common](d) to (2.75,1)[out=-45,in=0] to (2.75,.5)[out=180,in=225] to (2.75,1.5)
	[out=45,in=45] to (upm);
}
\newcommand{\tdef}{
    \node at (0,5) (b){};
    \node at (0,2) (a){};
    \node at (1,0) (c){};
    \node [mult] at (1,4) (m){};
    \node [delt] at (1,3) (upd){};
    \node [delt] at (2,2) (lod){};
    \draw[dpoint](b) to (m);
    \draw[dpoint](m) to (upd);
    \draw[defect](upd) to (a);
    \draw[dpoint](upd) to (lod);
    \draw[dpoint](lod) to (c);
	
	\draw[common](lod) to (2.75,1)[out=-45,in=0] to (2.75,.5)[out=180,in=225] to (2.75,1.5)
	[out=45,in=45] to (m);
}
\newcommand{\tred}{
    \node at (0,5) (b){};
    \node at (0,2) (a){};
    \node at (1,.5) (c){};
    \node [mult] at (1,4) (m){};
    \node [delt] at (1,3) (upd){};
    \node [delt] at (2,2) (lod){};
    \draw[dpoint](b) to (m);
    \draw[dpoint](m) to (upd);
    \draw[dpoint](upd) to (a);
    
    \draw[common](upd) to (lod);
    \draw[common](lod) to (c);
	\draw[common](lod) to (2.75,1)[out=-45,in=0] to (2.75,.5)[out=180,in=225] to (2.75,1.5)
	[out=45,in=45] to (m);
}
\newcommand{\exdef}{
        \node [delt] at (1,5)(ud){};
        \node [delt] at (2,4)(ld){};
        \node [mult] at (2,2)(um){};
        \node [mult] at (1,1)(lm){};
        \node at (1,6)(f){};
        \node at (3,6)(b){};
        \node at (1,0)(a){};
        \node at (3,0)(c){};
        \draw[dpoint](f) to [out=270, in = 90](ud);
        \draw[dpoint](b) to [out=270,in=25](um);
        
        \draw[defect](ud) to [out=200, in = 120] (lm);
        \draw[dpoint](ud) to [out=-20,in=90](ld);
        
        \draw[defect](ld) to [out=200,in=120](um);
        \draw[dpoint](ld) to [out=-20,in = 90](c);
        
        \draw[dpoint](um) to [out=270,in=45](lm);
        
        \draw[dpoint](lm) to [out = 270,in=90](a);
}
\tikzset{
->-/.style={decoration={
  markings,
  mark=at position #1 with {\arrow{>}}},postaction={decorate}},
  delt/.style={draw,shape=circle},
  mult/.style={draw,shape=circle},
  defect/.style = {decoration={markings,mark=at position .5 with {\arrow{>}}},postaction={decorate},blue,very thick, dashed},
  dpoint/.style = {decoration={markings,mark=at position .5 with {\arrow{>}}},postaction={decorate},red,thin, dotted},
  common/.style = {decoration={markings,mark=at position .5 with {\arrow{>}}},postaction={decorate},black,very thin}
  }
\theoremstyle{plain}	
\newtheorem{theorem}{Theorem}[section]
\newtheorem{proposition}[theorem]{Proposition}
\newtheorem{conjecture}[theorem]{Conjecture}
\theoremstyle{definition} 
\newtheorem{definition}[theorem]{Definition}
\newtheorem{example}[theorem]{Example}
\begin{document}

\title[State-Sum Construction of TQFTs with Defects]{A General State-Sum Construction of 2-Dimensional Topological Quantum Field Theories with Defects}

\author{Gathoni Kamau-Devers}
\address{University of California, Berkeley, CA 94720, USA}

\author{Gail Jardine}
\address{Brigham Young University, Provo, UT 84602, USA}

 \author{David Yetter}
\address{Kansas State University, Department of Mathematics, Cardwell Hall, Manhattan, KS 66506, USA}
\email{dyetter@math.ksu.edu}

\begin{abstract}

We derive the general state sum construction for 2D topological quantum field theories (TQFTs) with source defects on oriented curves, extending the state-sum construction from special symmetric Frobenius algebra for 2-D TQFTs without defects (cf. Lauda \& Pfeiffer \cite{LP}). From the extended Pachner moves (Crane \& Yetter \cite{CY}), we derive equations that we subsequently translate into string diagrams so that we can easily observe their properties. As in Dougherty, Park and Yetter \cite{DPY}, we require that triangulations be flag-like, meaning that each simplex of the triangulation is either disjoint from the defect curve, or intersects it in a closed face, and that the extended Pachner moves preserve flag-likeness.  

This research was conducted under the mentorship of Prof. David Yetter at Kansas State University with the support of NSF grant DMS-1262877.
\end{abstract}

\maketitle

\section{Introduction}

The present paper generalizes both the construction of
Dougherty, Park and Yetter \cite{DPY} in which a state sum construction for finite gauge-group Dijkgraff-Witten theory on surfaces with co-dimension 1 defects was given and the restriction to an ordinary (closed) 2-dimensional TQFT of the construction given in \cite{LP} of open-closed 2-dimensional TQFTs from a state-sum using a symmetric Frobenius algebra as initial data.  

\section{An Oriented Curve on an Oriented Surface}

We start with an oriented curve on an oriented surface. By convention, we draw the orientation of the surface as counterclockwise. 

We then triangulate the curve and extend this to a triangulation of the surface satisfying

\begin{definition}
A triangulation of a curve-surface pair $\Gamma \subset \Sigma$ is {\em flag-like} if the intersection of every simplex of the triangulation with $C$ is either empty or a face of the simplex.
\end{definition}

The name is derived from general setting of stratified spaces, in which a triangulation is flag-like if and only if the restriction of the filtration of the space to any simplex is a simplicial flag (cf. \cite{Friedman}).

We observe that three types of edges arise in flag-like triangulations: those that do not intersect the curve, those that intersect the curve at exactly one point, and finally, those that are entirely on the curve. Thoughout our exposition we will denote edges not intersecting the curve with by solid black lines and label edges intersecting the curve at a point by red dotted lines. Finally, we depict vertices on the curve as large blue dots and edges on the curve as dashed blue lines.  Similarly there are three distinct types of triangles: those that contain a defect on a single vertex, those whose edge is the defect and those that are not interacting with the defect at all shown in this order below:

\vspace{8mm}
\begin{tikzpicture}[scale=3.5]

    \coordinate (0) at (0,0);
    \coordinate (2) at (1,0);
    \coordinate (1) at (0.5,0.866);
    \coordinate (3) at (0.5, 0.33);
    
    \draw[dpoint][ultra thick] (0) -- (1);
    \draw[thick] (1) -- (2);
    \draw[dpoint][ultra thick] (0) -- (2);

\draw node[fill=blue,circle,scale=0.7]{} (0,0);
    
\end{tikzpicture}
\hspace{3mm}
\nolinebreak
\begin{tikzpicture}[scale=3.5]

    \coordinate (0) at (0,0);
    \coordinate (2) at (1,0);
    \coordinate (1) at (0.5,0.866);
    
    \draw[blue][dashed][very thick] (0) -- (1);
    \draw[dpoint][ultra thick] (1) -- (2);
    \draw[dpoint][ultra thick] (0) -- (2);
    
    \draw node[fill=blue,circle,scale=0.7] at (0,0){};
    \draw node[fill=blue,circle,scale=0.7] at (0.5,0.866){};\hfill
\end{tikzpicture}
\hspace{3mm}
\nolinebreak
\begin{tikzpicture}[scale=3.5]

    \coordinate (0) at (0,0);
    \coordinate (2) at (1,0);
    \coordinate (1) at (0.5,0.866);
    
    \draw[common][thick] (0) -- (1);
    \draw[thick] (1) -- (2);
    \draw[common][thick] (0) -- (2);\hfill
\end{tikzpicture}
\vspace{8mm}

In our construction, the edges of the curve will be oriented according to the orientation of the curve, those incident with the curve at a vertex will be oriented away from the curve, and those not incident with the curve will be oriented according to an arbitrary ordering of the vertices not on the curve from the earlier vertex to the later.  In the figure above orientations are indicated on only two of the edges -- those oriented away from the curve, or those oriented away from the earliest numbered vertex.  Both orientations of the remaining edge can occur.

For each triangle, regardless of the type, either two edges agree with the boundary orientation induced by the orientation of the surface, while the third edge disagrees, or one agrees with the boundary orientation, while two disagree.  

If $\mathcal T$ is a flag-like triangulation, we denote the set of $n$-simplexes of $\mathcal T$ by ${\mathcal T}_n$, the set of $n$-simplexes of with $k$ vertices lying on the curve by ${\mathcal T}_n^k$, and for $n = 2$, the set of $2$-simplexes with $k$ vertices lying on the curve and two (resp. one) edge oriented in agreement with the boundary orientation by ${\mathcal T}_n^{k +}$ (resp. ${\mathcal T}_n^{k -}$).

\section{The State-Sum}

Fix a field $K$.  Usually we will have in mind the field of complex numbers ${\mathbb C}$, but the construction will work for an arbitrary field.  

Our state-sum construction begins with three sets of labels, $A$, $B$, and $C$, with local states given by coloring each edge not incident with the curve (resp. incident with the curve at a vertex, lying on the curve) with an element of $A$ (resp. $B$, $C$).

The local value of a state at a triangle is then given by the value of one of six functions, $a:A^3 \rightarrow K$, $\bar{a}:A^3 \rightarrow K$, $b:B \times A \times B \rightarrow K$, $\bar{b}:B^2 \times A \rightarrow K$, $c:C \times B^2 \rightarrow K$, and $\bar{c}:B \times C \times B \rightarrow K$, which we call {\em coefficient functions}.

In the construction, the arguments of $a$ (resp $\bar{a}$, $b$, $\bar{b}$, $c$, $\bar{c}$) will be the labels on the edges of triangles in ${\mathcal T}_2^{0'+}$ (resp. ${\mathcal T}_2^{0,-}$ , ${\mathcal T}_2^{1,+}$ , ${\mathcal T}_2^{1,-}$ , ${\mathcal T}_2^{2,+}$ , ${\mathcal T}_2^{2,-}$). For functions denoted by an unbarred letter the first two arguments, written as subscripts, will be the labels on the two edges oriented in agreement with the boundary orientation, in the order they occur according to the orientation, and the third argument, written as a superscript, will be the label on the edge oriented opposite to the boundary orientation. For functions denoted with a barred letter the first argument, written as a subscript, will be the label on the edge oriented in agreement with the boundary orientation, and the second and third arguments, written as superscripts, will be the labels of the remaining edges in the order they occur when traversed according to their orientation (as oriented edges).

\begin{definition}
A {\em $A,B,C$- coloring} (or simply {\em coloring}) of an a flag-like triangulation $\mathcal T$ of a curve-surface pair $\Gamma \subset \Sigma$ equipped with an ordering of ${\mathcal T}^0_0$ is a function 

\[ \lambda:{\mathcal T}_1\rightarrow A \coprod B \coprod C \]

such that $\epsilon \in {\mathcal T}_1^0$ (resp. $\epsilon \in {\mathcal T}_1^1$, $\epsilon \in {\mathcal T}_1^2$) implies $\lambda(\epsilon) \in A$ (resp. $\lambda(\epsilon) \in B$, $\lambda(\epsilon) \in C$).
\end{definition}

Although in specific instances of our local state functions, we write their arguments as sub- and super-scripts as discussed above, if $\lambda$ is a coloring of a flag-like triangulation $\mathcal T$, $\sigma \in {\mathcal T}_2$ and $f$ is the local state function applicable to its edge labels as discussed above, we will denote by $f(\lambda(\sigma))$ the value of the local state function on the edge labels assigned to $\sigma$ by $\lambda$ in the appropriate order.

We can now define a state-sum associated by the data $A, B, C, a, \bar{a}, b, \bar{b}, c, \bar{c}$ to triangulation $\mathcal T$ of a curve-surface pair $\Gamma \subset \Sigma$

\begin{eqnarray} \label{unnormalized}
Z(\Gamma \subset \Sigma, {\mathcal T}) & = & \sum_{\lambda}\left(
\prod_{\sigma\in T_2^{0,+}} a\left(\lambda\left(\sigma\right)\right)
\prod_{\sigma\in T_2^{0,-}} \bar{a}\left(\lambda\left(\sigma\right)\right)
\prod_{\sigma\in T_2^{0,+}} b\left(\lambda\left(\sigma\right)\right)\right. \nonumber \\
 & & \left. 
\prod_{\sigma\in T_2^{0,-}} \bar{b}\left(\lambda\left(\sigma\right)\right)
\prod_{\sigma\in T_2^{0,+}} c\left(\lambda\left(\sigma\right)\right)
\prod_{\sigma\in T_2^{2,-}} \bar{c}\left(\lambda\left(\sigma\right)\right)
\right),
\end{eqnarray}

\noindent which we call the {\em unnormalized state-sum}.  It will be our primary occupation in the balance of the paper to discover necessary and sufficient conditions on the data  $A, B, C, a, \bar{a}, b, \bar{b}, c, \bar{c}$ which ensure that a natural normalization by multiplying by quantities depending only on coarse combinatorial properties of the triangulation will be independent of the triangulation $\mathcal T$ and the ordering of ${\mathcal T}^0_0$.

\section{(Extended) Pachner Moves and Invariance Equations in Scalar Form}

As was shown in \cite{DPY}, two curve-surface pairs $\Gamma_i \subset \Sigma_i$, for $i = 1,2$, equipped with piece-wise linear structures are PL homeomorphic if and only if a flaglike triangulation of the one can be made combinatorially equivalent to a flaglike triangulation of the other by a sequence of extended Pachner (bistellar) moves in the sense of \cite{CY}, each of which preserves the flaglikeness.  For brevity we say such moves are {\em flaglike} (As was observed in \cite{DPY}, the barycentric subdivision of any triangulation of a surface, for which the curve is a subcomplex, is a flaglike triangulation, so the PL structures are determined by the flaglike triangulations.)

Below, we depict the six different flaglike Pachner move and one flaglike extended Pachner move for triangulations of curve-surface pairs. 

There are 35 scalar equations that follow from the seven flaglike moves on the triangulations shown below. As in the discussion above of triangles, we indicate orientation on edges only when they are required by our orientation conventions or may be assumed without loss of generality.  Each feasible assignment of orientations to the unoriented edges gives rise to an equation, which the coefficient functions must satisfy, or which must be restored by some normalization, if the state-sum is to give rise to a topological invariant of the curve-surface pair.

We give first the equations giving invariance under 2-2 Pachner moves, then those corresponding to invariance under the 1-3 Pachner moves, and finally that corresponding to invariance under the 2-4 extended Pachner move.  In what follows, we will see that the equations below arising from the 1-3 (resp. 2-4) moves can only be satisfied in trivial cases, and must be restored by introducing a normalization factor depending on the  number of vertices in the compliment of the curve (resp. on the curve).

\begin{center}
\begin{tikzpicture}

  \coordinate (0) at (0,0);
  \coordinate (1) at (75:3);
  \coordinate (2) at ($(1) +(3,0)$);
  \coordinate (3) at ($(2) +(-105:3)$);

  \draw[common][thick] (0) -- (1)node[font=\small,midway,left]{$r$};
  \draw[common][thick] (0) -- (3)node[font=\small,midway,below]{$q$};
  \draw[thick] (1) -- (2)node[font=\small,midway,above]{$s$};
  \draw[thick] (2) -- (3)node[font=\small,midway,right]{$t$};
  \draw[thick] (1) -- (3)node[font=\small,midway,below]{$u$};

\end{tikzpicture}
\nolinebreak
\eqarrow{

\coordinate (0) at (0,0);
\coordinate (1) at (75:3);
\coordinate (2) at ($(1) +(3,0)$);
\coordinate (3) at ($(2) +(-105:3)$);\hfill

\draw[common][thick] (0) -- (1) node[font=\small,midway,left]{$r$};
\draw[common][thick] (0) -- (3) node[font=\small,midway,below]{$q$};
\draw[common][thick] (0) -- (2) node[font=\small,midway,below]{$v$};
\draw[thick] (1) -- (2)node[font=\small,midway,above]{$s$};
\draw[thick] (2) -- (3)node[font=\small,midway,right]{$t$};

}
\end{center}

\begin{eqnarray}
\sum a_{qu}^r a_{ts}^u & = & \sum a_{vs}^r a_{qt}^v \\
\sum \bar{ a}_q^{ru}\bar{ a}_u^{st} & = & \sum \bar{ a}_v^{rs} \bar{ a}_q^{vt}\\
\sum a_{qu}^r \bar{ a}_t^{us} & = & \sum \bar{ a}_v^{rs} a_{qt}^v \\
\sum \bar{ a}_q^{ru} a_{ut}^s & = &\sum \bar{ a}_v^{rs} a_{qt}^v\\
\sum \bar{ a}_q^{ru} a_{su}^t & = &\sum a_{vs}^r \bar{ a}_q^{vt}\\
\sum a_{qu}^r \bar{ a}_s^{tu} & = & \sum a_{vs}^r \bar{ a}_q^{vt}
\end{eqnarray}
\medskip

\begin{center}
\begin{tikzpicture}

  \coordinate (0) at (0,0);
  \coordinate (1) at (75:3);
  \coordinate (2) at ($(1) +(3,0)$);
  \coordinate (3) at ($(2) +(-105:3)$);

  \draw[dpoint][ultra thick] (0) -- (1)node[font=\small,midway,left]{$r$};
  \draw[dpoint][ultra thick] (0) -- (3)node[font=\small,midway,below]{$q$};
  \draw[thick] (1) -- (2)node[font=\small,midway,above]{$s$};
  \draw[thick] (2) -- (3)node[font=\small,midway,right]{$t$};
  \draw[thick] (1) -- (3)node[font=\small,midway,below]{$u$};
  
  \draw node[fill=blue,circle,scale=0.7]{} (0,0);

\end{tikzpicture}
\nolinebreak
\eqarrow{

\coordinate (0) at (0,0);
\coordinate (1) at (75:3);
\coordinate (2) at ($(1) +(3,0)$);
\coordinate (3) at ($(2) +(-105:3)$);\hfill

\draw[dpoint][ultra thick] (0) -- (1) node[font=\small,midway,left]{$r$};
\draw[dpoint][ultra thick] (0) -- (3) node[font=\small,midway,below]{$q$};
\draw[dpoint][ultra thick] (0) -- (2) node[font=\small,midway,below]{$v$};
\draw[thick] (1) -- (2)node[font=\small,midway,above]{$s$};
\draw[thick] (2) -- (3)node[font=\small,midway,right]{$t$};
\draw node[fill=blue,circle,scale=0.7]{} (0,0);
}
\end{center}

\begin{eqnarray}
\sum b_{qu}^r a_{ts}^u & = & \sum b_{vs}^r b_{qt}^v\\
\sum \bar{ b}_q^{ru}\bar{ a}_u^{st} & = & \sum \bar{ b}_v^{rs} \bar{ b}_q^{vt}\\
\sum b_{qu}^r \bar{ a}_t^{us} & = & \sum \bar{ b}_v^{rs} b_{qt}^v\\
\sum \bar{ b}_q^{ru} a_{ut}^s & = & \sum \bar{ b}_v^{rs} a_{qt}^v\\
\sum \bar{ b}_q^{ru} a_{su}^t & = & \sum b_{vs}^r \bar{ b}_q^{vt}\\
\sum b_{qu}^r \bar{ a}_s^{tu} & = & \sum b_{vs}^r \bar{ b}_q^{vt}
\end{eqnarray}
\medskip

\begin{center}
\begin{tikzpicture}

  \coordinate (0) at (0,0);
  \coordinate (1) at (75:3);
  \coordinate (2) at ($(1) +(3,0)$);
  \coordinate (3) at ($(2) +(-105:3)$);

  \draw[thick][blue][dashed] (0) -- (1) node[font=\small,midway,left]{$r$};
  \draw[dpoint][ultra thick] (0) -- (3) node[font=\small,midway,below]{$q$};
  \draw[dpoint][ultra thick] (1) -- (2) node[font=\small,midway,above]{$s$};
  \draw[thick] (2) -- (3) node[font=\small,midway,right]{$t$};
  \draw[dpoint][ultra thick] (1) -- (3) node[font=\small,midway,below]{$u$};
  
  \draw node[fill=blue,circle,scale=0.7]{} (0,0);
  \draw node[fill=blue,circle,scale=0.7]{};
  \draw node[fill=blue,circle,scale=0.7] at (1){};
  
\end{tikzpicture}
\nolinebreak
\eqarrow{

\coordinate (0) at (0,0);
\coordinate (1) at (75:3);
\coordinate (2) at ($(1) +(3,0)$);
\coordinate (3) at ($(2) +(-105:3)$);\hfill

\draw[blue][thick][dashed] (0) -- (1) node[font=\small,midway,left]{$r$};
\draw[dpoint][ultra thick] (0) -- (3) node[font=\small,midway,below]{$q$};
\draw[dpoint][ultra thick] (0) -- (2) node[font=\small,midway,below]{$v$};
\draw[dpoint][ultra thick] (1) -- (2) node[font=\small,midway,above]{$s$};
\draw[thick] (2) -- (3) node[font=\small,midway,right]{$t$};
\draw node[fill=blue,circle,scale=0.7]{} (0,0);
\draw node[fill=blue,circle,scale=0.7] at (1){};
}
\end{center}

\begin{eqnarray}
\sum c_{rq}^u b_{ut}^s & = & \sum c_{rv}^s b_{qt}^v\\
\sum \bar{ c}_q^{ru} \bar{ b}_u^{st} & = & \sum \bar{ c}_v^{rs} \bar{ b}_q^{vt}\\
\sum \bar{ c}_q^{rv} b_{vt}^s & = & \sum \bar{ c}_u^{rs} b_{qt}^u\\
\sum c_{rq}^u \bar{ b}_u^{st} & = & \sum c_{rv}^s \bar{ b}_q^{vt}
\end{eqnarray}

\bigskip

Even without the expectation based on Lauda and Pfeiffer's state-sum construction of open-closed 2-dimensional TQFTs \cite{LP} that our initial data should somehow involve an algebra, and indeed a Frobenius algebra, we would be quickly drawn to that conclusion by examining the equations above:  the equation involving only instances of $a$ (resp. only instances of $\bar{a}$) is easily recognized as the equations expressing the associativity (resp. coassociativity) of an algebra (resp. coalgebra) in terms of structure coefficients with respect to a basis.

Similarly the equation involving only instances of $a$ and $b$ (resp. $\bar{a}$ and $\bar{b}$) is plainly the expression in terms of structure coefficients for the axiom characterizing an action of an associative algebra (resp. coassociative coalgebra) with basis $A$ on a module (resp. comodule) with basis $B$.  

This suggests regarding the sets of colors, $A$, $B$, and $C$ as bases for vector spaces, which we will, by abuse of notation, denote by the same letter as their basis, because in what follows they will displace the original sets of colors, and coefficients $a$ (resp. $\bar{a}$, $b$, $\bar{b}$, $c$, $\bar{c}$) as the structure coefficients for an associative multiplication
\[ \mu:A\otimes A \rightarrow A\]
(resp. a coassociative comultiplication
\[ \delta:A\rightarrow A\otimes A,\]
a right action
\[ \mu_B:B\otimes A \rightarrow B,\]
a right coaction
\[ \delta_B:B \rightarrow B\otimes A,\]
a left ``action''
\[ {\sf m}:C\otimes B\rightarrow B,\]
and a left ``coaction''
\[ {\sf d}:B\rightarrow C\otimes B ).\]

In the last two instances the use of the words ``action'' and ``coaction'' is a bit cavalier:  Even when we have worked out the complete structure, we will see that $C$ need not have any internal structure of the sort which usually justifies the use of the words action or coaction, and will use scare quotes to remind the reader of this fact. 

For instance $\mu$ is given by $s\otimes t \mapsto \sum_u a_{st}^u u$ whenever $s$ and $t$ are in the basis for $A$, and the sum runs over all basis elements $u$, while the ``coaction'' of $C$ on $B$ is given by $q \mapsto \sum_{ru}  \bar{c}_q^{ru} r\otimes u$ whenever $q$ is in the basis for $B$, $r$ ranges over the basis for $C$ and $u$ ranges over the basis for $B$.

The equations arising from the 2-2 Pachner moves will be seen to be mostly familiar equational axioms on these maps.  The equations arising from the 1-3 Pachner move and 2-4 extended Pachner move given below are of a less clear import.

\bigskip
\begin{center}
\begin{tikzpicture}[scale=3.5]

    \coordinate (0) at (0,0);
    \coordinate (2) at (1,0);
    \coordinate (1) at (0.5,0.866);
    
    \draw[common][thick] (0) -- (1) node[font=\small,midway,left]{$r$};
    \draw[thick] (1) -- (2) node[font=\small,midway,right]{$s$};
    \draw[common][thick] (0) -- (2) node[font=\small,midway,below]{$t$};
    
  \node at (1.25,0.43)(center){$\leftrightarrow$}; 
\end{tikzpicture}
\nolinebreak
\begin{tikzpicture}[scale=3.5]

    \coordinate (0) at (0,0);
    \coordinate (2) at (1,0);
    \coordinate (1) at (0.5,0.866);
    \coordinate (3) at (0.5, 0.33);
    
    \draw[common][thick] (0) -- (1) node[font=\small,midway,left]{$r$};
    \draw[thick] (1) -- (2) node[font=\small,midway,right]{$s$};
    \draw[common][thick] (0) -- (2) node[font=\small,midway,below]{$t$};
    
  \draw[thick] (0) -- (3)node[font=\small,midway,below]{$\sigma$};
  \draw[thick] (1) -- (3)node[font=\small,midway,right]{$\tau$};
  \draw[thick] (3) -- (2)node[font=\small,midway,below]{$\rho$};\hfill  
\end{tikzpicture}
\end{center}


\begin{eqnarray}a_{ts}^r & = & \sum_{\rho,\sigma,\tau}\bar{ a}_\sigma^{r\tau} a_{s\tau}^\rho a_{t\rho}^\sigma\\
a_{ts}^r & = & \sum_{\rho,\sigma,\tau}a_{\sigma\tau}^r a_{\rho,s}^\tau \bar{ a}_t^{\sigma\rho}\\
a_{rs}^r & = & \sum_{\rho,\sigma,\tau} a_{t \rho}^\sigma \bar{ a}_s^{\rho \tau} a_{\sigma \tau}^r\\
a_{ts}^r & = & \sum_{\rho,\sigma,\tau}a_{\sigma t}^\rho a_{\rho s}^\tau \bar{ a}_\tau^{\sigma r}\\
\bar{ a}_t^{rs} & = & \sum_{\rho,\sigma,\tau} \bar{ a}_\sigma^{r\tau} \bar{ a}_\tau^{s\rho} a_{t\rho}^\sigma\\
\bar{ a}_t^{rs} & = & \sum_{\rho,\sigma,\tau}\bar{ a}_\tau^{\sigma r} \bar{ a}_\rho^{\tau,s} a_{\sigma t}^\rho\\
\bar{ a}_t^{rs} & = & \sum_{\rho,\sigma,\tau}a_{\sigma\tau}^r \bar{ a}_\rho^{\tau s}\bar{ a}_t^{\sigma\rho}\\
\bar{ a}_t^{rs} & = & \sum_{\rho,\sigma,\tau}\bar{ a}_\sigma^{r\tau} a_{\tau\rho}^s \bar{ a}_t^{\sigma\rho}
\end{eqnarray}
\medskip

\begin{center}
\begin{tikzpicture}[scale=3.5]

    \coordinate (0) at (0,0);
    \coordinate (2) at (1,0);
    \coordinate (1) at (0.5,0.866);
    \coordinate (3) at (0.5, 0.33);
    
    \draw[dpoint][ultra thick] (0) -- (1)node[font=\small,midway,left]{$r$};
    \draw[thick] (1) -- (2) node[font=\small,midway,right]{$s$};
    \draw[dpoint][ultra thick] (0) -- (2)node[font=\small,midway,below]{$t$};

\node at (1.25,0.43)(center){$\leftrightarrow$};  
\draw node[fill=blue,circle,scale=0.7]{} (0,0);
    
\end{tikzpicture}
\nolinebreak
\begin{tikzpicture}[scale=3.5]
    \coordinate (0) at (0,0);
    \coordinate (2) at (1,0);
    \coordinate (1) at (0.5,0.866);
    
    \draw[dpoint][ultra thick] (0) -- (1)node[font=\small,midway,left]{$r$};
    \draw[thick] (1) -- (2) node[font=\small,midway,right]{$s$};
    \draw[dpoint][ultra thick] (0) -- (2)node[font=\small,midway,below]{$t$};
    
  \draw node[fill=blue,circle,scale=0.7]{} (0,0);
  \draw[dpoint][ultra thick] (0) -- (3)node[font=\small,midway,below]{$\sigma$};
  \draw[thick] (1) -- (3) node[font=\small,midway,right]{$\tau$};
  \draw[thick] (3) -- (2) node[font=\small,midway,below]{$\rho$};\hfill  
\end{tikzpicture}
\end{center}

\begin{eqnarray}
\bar{ b}_t^{rs} & = & \sum_{\rho,\sigma,\tau}\bar{ b}_\sigma^{r\tau}\bar{ a}_\tau^{s\rho} b_{t\rho}^\sigma\\
\bar{ b}_t^{rs} & = & \sum_{\rho,\sigma,\tau} b_{\sigma\tau}^r \bar{ a}_\rho^{\tau s}\bar{ b}_t^{\sigma\rho}\\
\bar{ b}_t^{rs} & = & \sum_{\rho,\sigma,\tau}\bar{ b}_\sigma^{r \tau} a_{\tau \rho}^s \bar{ b}_t^{\sigma \rho}\\
b_{ts}^r & = & \sum_{\rho,\sigma,\tau}\bar{ b}_\sigma^{r\tau} a_{s\tau}^\rho b_{t\rho}^\sigma\\
b_{ts}^r & = & \sum_{\rho,\sigma,\tau} b_{\sigma\tau}^r a_{\rho s}^\tau \bar{ b}_t^{\sigma\rho}\\
b_{ts}^r & = & \sum_{\rho,\sigma,\tau} b_{t\rho}^\sigma \bar{a}_s^{\rho \tau} b^r_{\sigma \tau}
\end{eqnarray}

\medskip

\begin{center}
\begin{tikzpicture}[scale=3.5]

    \coordinate (0) at (0,0);
    \coordinate (2) at (1,0);
    \coordinate (1) at (0.5,0.866);
    
    \draw[blue][dashed][very thick] (0) -- (1) node[font=\small,midway,left]{$r$};
    \draw[dpoint][ultra thick] (1) -- (2) node[font=\small,midway,right]{$s$};
    \draw[dpoint][ultra thick] (0) -- (2) node[font=\small,midway,below]{$t$};
    
  \draw node[fill=blue,circle,scale=0.7] at (0,0){};
  \draw node[fill=blue,circle,scale=0.7] at (0.5,0.866){};
  \node at (1.25,0.43)(center){$\leftrightarrow$}; 
\end{tikzpicture}
\nolinebreak
\begin{tikzpicture}[scale=3.5]

    \coordinate (0) at (0,0);
    \coordinate (2) at (1,0);
    \coordinate (1) at (0.5,0.866);
    \coordinate (3) at (0.5, 0.33);
    
    \draw[blue][dashed][very thick] (0) -- (1)node[font=\small,midway,left]{$r$};
    \draw[dpoint][ultra thick] (1) -- (2) node[font=\small,midway,right]{$s$};
    \draw[dpoint][ultra thick] (0) -- (2) node[font=\small,midway,below]{$t$};
    
  \draw node[fill=blue,circle,scale=0.7] at (0,0){};
  \draw node[fill=blue,circle,scale=0.7] at (0.5,0.866){};
  \draw[dpoint][ultra thick] (0) -- (3) node[font=\small,midway,below]{$\sigma$};
  \draw[dpoint][ultra thick] (1) -- (3) node[font=\small,midway,right]{$\tau$};
  \draw[thick] (3) -- (2) node[font=\small,midway,below]{$\rho$};\hfill  
\end{tikzpicture}
\end{center}

\begin{eqnarray}
\bar{ c}_t^{rs} &  = & \sum_{\rho,\sigma,\tau}\bar{ c}_\sigma^{r\tau}\bar{ b}_\tau^{s\rho} b_{t\rho}^\sigma\\
c_{rt}^s & = & \sum_{\rho,\sigma,\tau} c_{r\sigma}^{\tau} \bar{b}_\tau^{s\rho} b_{t\rho}^\sigma\\
\bar{ c}_t^{rs} & = & \sum_{\rho,\sigma,\tau}\bar{ c}_\sigma^{r\tau} b_{\tau\rho}^s \bar{ b}_t^{\sigma\rho}\\
c_{rt}^s & = &\sum_{\rho,\sigma,\tau}c_{r\sigma}^\tau b_{\tau\rho}^s \bar{ b}_t^{\sigma\rho}
\end{eqnarray}
\medskip

\begin{center}
\begin{tikzpicture}

\coordinate (0) at (0,0);
\coordinate (1) at (75:3);
\coordinate (2) at ($(1) +(3,0)$);
\coordinate (3) at ($(2) +(-105:3)$);

\draw[dpoint][ultra thick] (1) -- (0) node[font=\small,midway,left]{$r$};
\draw[dpoint][ultra thick] (3) -- (0) node[font=\small,midway,below]{$q$};
\draw[dpoint][ultra thick] (1) -- (2) node[font=\small,midway,above]{$s$};
\draw[dpoint][ultra thick] (3) -- (2) node[font=\small,midway,right]{$t$};
\draw[thick][defect] (3) -- (1) node[font=\small,midway,below]{$f$};
\draw node[fill=blue,circle,scale=0.7] at (1){};
\draw node[fill=blue,circle,scale=0.7] at (3){};

\end{tikzpicture}
\nolinebreak
\eqarrow{

\coordinate (0) at (0,0);
\coordinate (1) at (75:3);
\coordinate (2) at ($(1) +(3,0)$);
\coordinate (3) at ($(2) +(-105:3)$);
\coordinate (4) at ($(0) + (1.87,1.5)$);

\draw[dpoint][ultra thick] (4) -- (0) node[font=\small,midway,above]{$g$};
\draw[dpoint][ultra thick] (4) -- (2) node[font=\small,midway,above]{$j$};
\draw[dpoint][ultra thick] (1) -- (0) node[font=\small,midway,left]{$r$};
\draw[dpoint][ultra thick] (3) -- (0) node[font=\small,midway,below]{$q$};
\draw[dpoint][ultra thick] (1) -- (2) node[font=\small,midway,above]{$s$};
\draw[dpoint][ultra thick] (3) -- (2) node[font=\small,midway,right]{$t$};
\draw[defect][thick] (4) -- (1) node[font=\small,midway,above]{$k$};
\draw[defect][thick] (3) -- (4) node[font=\small,midway,above]{$h$};
\draw node[fill=blue,circle,scale=0.7] at (1){};
\draw node[fill=blue,circle,scale=0.7] at (3){};
\draw node[fill=blue,circle,scale=0.7] at (1){};
\draw node[fill=blue,circle,scale=0.7] at (4){};\hfill
}
\end{center}

\begin{eqnarray}
\sum c_{fr}^q \bar{ c}_t^{fs} & = & \sum c_{kr}^g \bar{ c}_j^{ks} \bar{ c}_t^{hj} c_{hg}^q
\end{eqnarray}


\bigskip

\section{String Diagrams}

In preference to rewriting the 35 equations needed for topological invariance as equations between composites of monoidal prolongations of the maps $\mu, \delta, \mu_B, \delta_B, m,$ $d$, and the structure maps of $K{\rm -v.s.}_{fd}$ as a compact closed category, written as equations between symbol strings, we prefer to use string diagram notation (cf. \cite{JS, Y2}) to denote the vector forms of the equations.  

We will denote the vector spaces $A$, $B$ and $C$ by solid, dotted and dashed arcs, respectively, each oriented downward, their dual spaces by the same sort of arc oriented upward, and the operations by trivalent nodes:

\begin{itemize}
\item[$\mu:A\otimes A \rightarrow A$]
\begin{center}\raisebox{-.5\height}{\begin{tikzpicture}\multi \end{tikzpicture}}\end{center}
\item[$\delta:A\rightarrow A\otimes A$]
\begin{center}\raisebox{-.5\height}{\begin{tikzpicture}\edelt \end{tikzpicture}}\end{center}
\item[$\mu_B:B\otimes A \rightarrow B$]
\begin{center}\raisebox{-.5\height}{\begin{tikzpicture}\actred \end{tikzpicture}}\end{center}
\item[$\delta_B:B\rightarrow B\otimes A$]
\begin{center}\raisebox{-.5\height}{\begin{tikzpicture}\eredelt \end{tikzpicture}}\end{center}
\item[$m:C\otimes B \rightarrow B$]
\begin{center}\raisebox{-.5\height}{\begin{tikzpicture}\actdef \end{tikzpicture}}\end{center}
\item[$d:C\rightarrow C\otimes B$]
\begin{center}\raisebox{-.5\height}{\begin{tikzpicture}\edefdelt \end{tikzpicture}}\end{center}

\end{itemize}

Note that there is no need to label the nodes, since the inputs and outputs identify which map is represented by the node.

As usual the symmetry is denoted by the arcs for the objects crossing, the evaluation map as a local minimum with the upward oriented arc on the left and the downward on the right, and the projective-coordinate or coevaluation map as a local maximum with the downward oriented arc on the left and the upward on the right.

The equations corresponding to the 2-2 Pachner moves and the 2-4 extended Pachner move can all be expressed using only the operations and the symmetry, while those corresponding to the 1-3 Pachner move will require use of the evaluation and coevaluation maps as well.  Below, we give the string diagram equations in the corresponding order to the scalar equations of the previous section, equation $(n+35)$ below being equivalent to equation $(n)$ above for $n = 2,\ldots, 36$.

\begin{eqnarray}
\begin{tikzpicture}\icom\end{tikzpicture} & = & \begin{tikzpicture}\jcom\end{tikzpicture} \\
\begin{tikzpicture}\kcom\end{tikzpicture} & = & \begin{tikzpicture}\lcom\end{tikzpicture} 
\end{eqnarray}

\begin{eqnarray}
\begin{tikzpicture}\qcom\end{tikzpicture} & = & \begin{tikzpicture}\ncom\end{tikzpicture} \\
\begin{tikzpicture}\mcom\end{tikzpicture} & = & \begin{tikzpicture}\ncom\end{tikzpicture} \\
\begin{tikzpicture}\ocom\end{tikzpicture} & = & \begin{tikzpicture}\pcom\end{tikzpicture} \\
\begin{tikzpicture}\rcom\end{tikzpicture} & = & \begin{tikzpicture}\ocom\end{tikzpicture} \\
\begin{tikzpicture}\ired\end{tikzpicture} & = & \begin{tikzpicture}\jred\end{tikzpicture} 
\end{eqnarray}

\begin{eqnarray}
\begin{tikzpicture}\kred\end{tikzpicture} & = & \begin{tikzpicture}\lred\end{tikzpicture} \\
\begin{tikzpicture}\qred\end{tikzpicture} & = & \begin{tikzpicture}\nred\end{tikzpicture} \\
\begin{tikzpicture}\mred\end{tikzpicture} & = & \begin{tikzpicture}\nred\end{tikzpicture} \\
\begin{tikzpicture}\ored\end{tikzpicture} & = & \begin{tikzpicture}\preda\end{tikzpicture} \\
\begin{tikzpicture}\rred\end{tikzpicture} & = & \begin{tikzpicture}\ored\end{tikzpicture} \\
\begin{tikzpicture}\idef\end{tikzpicture} & = & \begin{tikzpicture}\jdef\end{tikzpicture} \\
\begin{tikzpicture}\kdef\end{tikzpicture} & = & \begin{tikzpicture}\ldef\end{tikzpicture} 
\end{eqnarray}

\begin{eqnarray}
\begin{tikzpicture}\mdef\end{tikzpicture} & = & \begin{tikzpicture}\ndefa\end{tikzpicture} \\
\begin{tikzpicture}\ndefb\end{tikzpicture} & = & \begin{tikzpicture}\qdef\end{tikzpicture} \\
\begin{tikzpicture}\multi\end{tikzpicture} & = & \begin{tikzpicture}\scom\end{tikzpicture} \\
\begin{tikzpicture}\multi\end{tikzpicture} & = & \begin{tikzpicture}\ccom\end{tikzpicture} \\
\begin{tikzpicture}\multi\end{tikzpicture} & = & \begin{tikzpicture}\dcom\end{tikzpicture} 
\end{eqnarray}

\begin{eqnarray}
\begin{tikzpicture}\multi\end{tikzpicture} & = & \begin{tikzpicture}\bcom\end{tikzpicture} \\
\begin{tikzpicture}\edelt\end{tikzpicture} & = & \begin{tikzpicture}\tcom\end{tikzpicture}  \\
\begin{tikzpicture}\edelt\end{tikzpicture} & = & \begin{tikzpicture}\fcom\end{tikzpicture} \\
\begin{tikzpicture}\edelt\end{tikzpicture} & = & \begin{tikzpicture}\gcom\end{tikzpicture} 
\end{eqnarray}

\begin{eqnarray}
\begin{tikzpicture}\edelt\end{tikzpicture} & = & \begin{tikzpicture}\hcom\end{tikzpicture} \\
\begin{tikzpicture}\eredelt\end{tikzpicture} & = & \begin{tikzpicture}\tred\end{tikzpicture} \\
\begin{tikzpicture}\eredelt\end{tikzpicture} & = & \begin{tikzpicture}\gred\end{tikzpicture} \\
\begin{tikzpicture}\eredelt\end{tikzpicture} & = & \begin{tikzpicture}\hred\end{tikzpicture} 
\end{eqnarray}

\begin{eqnarray}
\begin{tikzpicture}\actred\end{tikzpicture} & = & \begin{tikzpicture}\sred\end{tikzpicture} \\
\begin{tikzpicture}\actred\end{tikzpicture} & = & \begin{tikzpicture}\cred\end{tikzpicture} \\
\begin{tikzpicture}\actred\end{tikzpicture} & = & \begin{tikzpicture}\dred\end{tikzpicture} \\
\begin{tikzpicture}\edefdelt\end{tikzpicture} & = & \begin{tikzpicture}\tdef\end{tikzpicture}
\end{eqnarray}

\begin{eqnarray}
\begin{tikzpicture}\actdef\end{tikzpicture} & = & \begin{tikzpicture}\sdef\end{tikzpicture} \\
\begin{tikzpicture}\edefdelt\end{tikzpicture} & = & \begin{tikzpicture}\hdef\end{tikzpicture} \\
\begin{tikzpicture}\actdef\end{tikzpicture} & = & \begin{tikzpicture}\ddef\end{tikzpicture} \\
\begin{tikzpicture}\odef\end{tikzpicture} & = & \begin{tikzpicture}\exdef\end{tikzpicture}
\end{eqnarray}

The reader familiar with string diagram representations of algebraic structures in monoidal categories will immediately recognize equations (37) to (40) as those axioms of a Frobenius algebra in the multiplication comultiplication form that involve neither the unit nor the counit.  As observed in \cite{AH} results of Kaplansky \cite{Ka} and Koch \cite{Ko} imply that these axioms together with the existence of a counit for the comultiplication imply that the existence of a unit for the multiplication is equivalent to the underlying vector space being finite dimensional.  Dually the existence of a unit for the multiplication implies the existence of a counit.  

It is an open question at this writing whether there are any examples of finite dimensional models for equations (37) through (40) which are neither unital nor counital.  After some unsuccessful effort to find such an example we make

\begin{conjecture} \label{onlyFrob}
If $A, \mu, \delta$ is a finite dimensional vector space equipped with an associative multiplication $\mu$ and a coassociative comultiplication $\delta$, moreover satisfying the equations given in string notation by equations (39) and (40), then $A$ admits a (co)unit for its (co)multiplication, and is, in fact, a Frobenius algebra.
\end{conjecture}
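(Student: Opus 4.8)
The plan is to reduce the statement to the construction of a single (co)unit and then to build one out of the finite-dimensional structure. First recall what the hypotheses say: (37) and (38) are associativity of $\mu$ and coassociativity of $\delta$, while (39) and (40) are the two Frobenius compatibilities $(\mu\otimes\mathrm{id})(\mathrm{id}\otimes\delta)=\delta\mu=(\mathrm{id}\otimes\mu)(\delta\otimes\mathrm{id})$; elementwise these read $\delta(ab)=a\cdot\delta(b)$ and $\delta(ab)=\delta(a)\cdot b$, i.e. $\delta\colon A\to A\otimes A$ is a homomorphism of $A$-bimodules for the outer action. By the results of Kaplansky and Koch invoked through \cite{AH}, in finite dimensions a counit for $\delta$ produces a unit for $\mu$, and dually a unit produces a counit; since a unit, a counit and (37)--(40) are exactly the axioms of a Frobenius algebra, it suffices to exhibit \emph{one} of the two.

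The essential difficulty is that the data (37)--(40) is ``scalar-free'': no composite of $\mu$, $\delta$ and the symmetry has target $K$, so a (co)unit cannot be assembled diagrammatically, and finite-dimensionality must enter through genuine linear algebra on $A$. I would treat $A$ as a finite-dimensional, a priori non-unital, associative algebra carrying the bimodule comultiplication $\delta$, and try to manufacture a two-sided unit from its idempotent and radical structure, lifting a unit of the semisimple quotient $A/\mathrm{rad}(A)$ through the nilpotent radical. The Frobenius relations would be the tool that recognizes such a candidate as a genuine unit: (39) and (40) give $\delta(ea)=e\cdot\delta(a)$ and $\delta(ae)=\delta(a)\cdot e$, so, once $\delta$ is known to be injective, the action of a candidate idempotent on $\delta$ controls its action on $A$. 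Because the two relations are mirror images, a right unit produced from (40) has a left-handed analogue from (39), and a left unit together with a right unit on the same algebra automatically coincide, yielding a two-sided unit; the handle endomorphism $H=\mu\delta$, forced by (39)--(40) to satisfy $aH(b)=H(a)b=H(ab)$, serves as the natural bookkeeping device throughout.

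The main obstacle, and the reason the statement is conjectural, is the control of degeneracy. The literal hypotheses do not exclude the trivial model $\mu=\delta=0$ on a nonzero space, which satisfies (37)--(40) yet has neither a unit nor a counit; hence some nondegeneracy---most naturally the injectivity of $\delta$ together with the surjectivity of $\mu$---must either be added as a hypothesis or, far better, be \emph{derived} from (37)--(40) and finiteness. Thus the heart of the proof is to show that the Frobenius compatibility forbids a subspace on which $\mu$ or $\delta$ collapses, i.e. that $\delta$ is injective and $\mu$ is surjective, and then that the radical cannot carry a one-sided unit failing to extend to a two-sided one. I expect this interaction between $\mathrm{rad}(A)$ and the bimodule map $\delta$ to be the genuinely hard step: the formal reductions above convert the problem into the purely finite-dimensional assertion that a local unit is global, which is precisely the content that would exclude a non-(co)unital model and has so far resisted proof.
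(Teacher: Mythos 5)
This statement is labelled a \emph{Conjecture} in the paper, and the authors explicitly present it as an open question (``It is an open question at this writing whether there are any examples\dots After some unsuccessful effort to find such an example we make\dots''). There is therefore no proof in the paper to compare yours against, and your proposal does not supply one either: by your own account the decisive steps --- deriving injectivity of $\delta$ and surjectivity of $\mu$ from (37)--(40) plus finite-dimensionality, and then promoting a one-sided or local unit to a global two-sided one --- are left unestablished. What you have written is a plausible strategy (reduce to producing a single (co)unit via the Kaplansky--Koch results the paper itself cites, then attack the radical/idempotent structure of $A$ as a non-unital finite-dimensional algebra with the bimodule map $\delta$), and that framing is consistent with the paper's own discussion preceding the conjecture, but it is a roadmap rather than an argument.

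One substantive point in your write-up deserves emphasis, because it bears on the correctness of the statement itself rather than on any proof of it: taking $\mu=0$ and $\delta=0$ on any nonzero finite-dimensional space satisfies (37)--(40) vacuously and admits neither a unit nor a counit, so the conjecture as literally worded is false unless a nondegeneracy hypothesis (or an implicit exclusion of trivial models) is read into it. Flagging this is genuinely useful --- it shows the conjecture needs to be restated before it can be proved --- but it also means your proposal cannot succeed as written: you cannot both accept the hypotheses as given and derive the nondegeneracy you need, since the zero model shows the hypotheses do not imply it. Any honest attempt must first decide what additional assumption (e.g.\ $\mu$ surjective, $\delta$ injective, or $\mu\delta\neq 0$) is to be imposed, and then the hard finite-dimensional algebra begins; that is exactly the part that remains open.
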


In the balance of this paper, we will assume the existence of a unit element and counit functional, thus assuming that $A$ is, in fact, a finite dimensional Frobenius algebra.

Equations (43) to (46) and (48) to (52) have a similar from, except that in the former the left-most strand denotes a tensorand of $B$, rather then $A$, while in the latter the middle tensorand is $B$ and the leftmost is $C$. Equations (43) (resp. (44)) is the statement that $B$ has an action (resp. coaction) of $A$ as an algebra (resp. coalgebra), while equations (48) (resp. (49)) are the statement that the  left ``action'' of $C$ and the right action of $A$ commute (resp. the left ``coaction'' of $C$ and the right coaction of $A$ commute). It is natural to make

\begin{definition} For a Frobenius algebra $A$ a {\em right Frobenius module} over $A$ is a vector space equipped with an action $\mu_B:B\otimes A \rightarrow B$ and a coaction $\delta_B:B \rightarrow A\otimes B$ moreover satisfying $\sum (ba)_{[1]}\otimes (ba)_{(2)} = \sum b\cdot a_{(1)}\otimes a_{(2)}$ and
$\sum (ba)_{[1]}\otimes (ba)_{(2)} = \sum b_{[1]}\otimes b_{(2)}a$ (equations (45) and (46) written in Sweedler notation).
\end{definition}

and

\begin{definition} A left ``coaction'' (resp. ``action'') of $C$ on $B$ is {\em Frobenius with respect to} a right action (resp. coaction) of $A$ on $B$ if the coaction and action (resp. action and coaction) satisfy the equation given in string notation by equation (52) (resp. (51)).
\end{definition}

Equations (54) and (55) (resp. (59) and (60)) are all equivalent in the presence of associativity (resp. coassociativity) and the hypothesis that the multiplication is surjective (resp. the comultiplication is injective), which hypotheses are immediate for a Frobenius algebra with unit and counit, are equivalent to the condition usually called {\em specialness}, that is that $m(\delta) = Id_A$.  

If $A$ is a special Frobenius algebra acting and coacting or the vector space $B$, equations (63) and (66) are redundant, and equations (62) and (65) become equivalent to the condition in the following definition, as do equations (69) and (70) in the presence of the ``action'' and ``coaction'' of $C$ commuting with the action and coaction of $A$ on $B$:

\begin{definition} A Frobenius module $B$ over a (special) Frobenius algebra $A$ is a {\em special Frobenius module} if it satisfies $\mu_B(\delta_B) = Id_B$.  
\end{definition}

To this point, we have considered all of the equations that involve neither the symmetry (twist) nor the evaluation or coevaluation, and have found (leaving aside the open question of whether examples that are neither unital nor counital exist and assuming the existence of a (co)unit) that our initial data must consist of a special Frobenius algebra $A$, a special Frobenius $A$-module, $B$, and a vector space $C$ with an ``action'' and ``coaction'' commuting with the action and coaction of $A$. 

It is well known that the ``Frobenius relations'' of equations (39) and (40) imply that the counit is the functional $\epsilon:A\rightarrow k$ in the classical definition of Frobenius algebra as a unital associative algebra equipped with a linear functional $\epsilon$ such that $(a,b)\mapsto \epsilon(ab)$ is a non-degenerate bilinear pairing.  It also follows from the same equations that if we use the pairing 
$(a,b)\mapsto \epsilon(ab)$ to identify $A^*$ with $A$, regarding the pairing as the evaluation map, the corresponding coevaluation (or projective coordinate system) map is given by $\delta(1)$.

In terms of bases for $A$, this may be seen as saying that there are two ordered bases for $A$, $\{e_1,\ldots e_n\}$ and $\{e^1,\ldots e^n\}$, such that $\epsilon(e^i e_j) = \delta_{ij}$ (where here $\delta$ is the Kronecker delta) and $\delta(1) = \sum_{i=1}^n e_i \otimes e^i$.

In string diagrams we represent the unit as a univalent node at the top of an arc and the counit as a univalent node at the bottom of an arc:

\begin{itemize}
\item[$1:K \rightarrow A$]
\begin{center}\raisebox{-.5\height}{\begin{tikzpicture}\unit \end{tikzpicture}}\end{center}
\item[$\epsilon:A\rightarrow K$]
\begin{center}\raisebox{-.5\height}{\begin{tikzpicture}\counit \end{tikzpicture}}\end{center}
\end{itemize}

The pairing and its corresponding coevaluation map are then denoted by

\begin{eqnarray*}
\begin{tikzpicture}\pairing\end{tikzpicture} & \mbox{\rm and} & \begin{tikzpicture}\copairing\end{tikzpicture}
\end{eqnarray*}

The existence of a counit (resp. unit) allows us to express the multiplication (resp. comultiplication) in terms of the comultiplication (resp. multiplication) and the bilinear paring (resp. corresponding coevaluation map) discussed above in two ways using relations (39) and (40), as shown for relation (39) the counit and multiplication here:

\begin{eqnarray}
\begin{tikzpicture}\qcomwcounit\end{tikzpicture} & = & \begin{tikzpicture}\ncomwcounit\end{tikzpicture}  = 
\begin{tikzpicture}\multi\end{tikzpicture}
\end{eqnarray}

\noindent the first equality by relation (39) the second by the defining property of a counit.  Similarly composing the right output of (40) with the counit gives another expression for the multiplication in terms of the comultiplication and the pairing, and precomposing the appropriate input with the unit give similar expressions for the comultiplication in terms of the multiplication and coevaluation map.

From this we can easily prove

\begin{proposition} \label{symmetricprop} If $A$ is a Frobenius algebra, the following are equivalent:

\begin{itemize}
\item relation (41) holds
\item relation (42) holds
\item $A$ is a symmetric Frobenius algebra, that is, the pairing $(a,b) \mapsto \epsilon(ab)$ is symmetric.
\item $A$ satisfies $\delta(1)$ = $\sigma_{A,A}(\delta(1))$, where $\sigma$ denotes the symmetry in the monoidal category of finite dimensional vector spaces over $K$.
\end{itemize}
\end{proposition}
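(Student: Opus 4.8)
The plan is to prove the four-way equivalence by routing everything through the two ``copairing'' conditions, using the Frobenius relations (39)--(40) as the engine for every reduction. Throughout I write $\beta(a,b)=\epsilon(ab)$ for the pairing, fix dual bases $\{e_i\},\{e^i\}$ with $\epsilon(e^i e_j)=\delta_{ij}$, and recall the copairing $\delta(1)=\sum_i e_i\otimes e^i =: \gamma$ together with the resolutions of the identity $a=\sum_i\epsilon(ae_i)e^i=\sum_i\epsilon(e^i a)e_i$. Relations (39)--(40) give the bimodule identities $\delta(a)=\sum_i a e_i\otimes e^i=\sum_i e_i\otimes e^i a$, and in scalar (Sweedler) form relation (41) reads $\sum (q_{(1)}s)\otimes q_{(2)}=\sum q_{(1)}\otimes (sq_{(2)})$ while relation (42) reads $\sum (qs_{(2)})\otimes s_{(1)}=\sum (q_{(1)}s)\otimes q_{(2)}$.

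First I would dispatch the equivalence of the last two bullet points. Contracting the proposed identity $\gamma=\sigma_{A,A}(\gamma)$ against the functionals $\epsilon(a\,\cdot\,)\otimes\epsilon(b\,\cdot\,)$ and resolving $\sum_i\epsilon(ae_i)e^i=a$, the left side evaluates to $\epsilon(ba)$ and the right side to $\epsilon(ab)$. Since these functionals separate points of $A\otimes A$ by non-degeneracy of $\beta$, the identity $\gamma=\sigma_{A,A}(\gamma)$ holds exactly when $\epsilon(ab)=\epsilon(ba)$ for all $a,b$, giving the third bullet $\Leftrightarrow$ the fourth.

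Next I would show relation (41) $\Leftrightarrow$ symmetry. Using $\delta(q)=\sum_i e_i\otimes e^i q$, both sides of relation (41) are the image of its $q=1$ instance $\sum_i (e_is)\otimes e^i=\sum_i e_i\otimes (se^i)$ under $\mathrm{id}\otimes(\text{right multiplication by }q)$, so the full relation is equivalent to that instance. Applying $\mathrm{id}\otimes\epsilon$ collapses the instance to $s=\sum_i e_i\,\epsilon(se^i)$; pairing against $\epsilon(y\,\cdot\,)$ yields $\epsilon(ys)=\epsilon(sy)$, hence symmetry. Conversely, pairing the $q=1$ instance against $\epsilon(\alpha\,\cdot\,)\otimes\epsilon(\beta\,\cdot\,)$, using symmetry to rewrite $\epsilon(\beta e^i)=\epsilon(e^i\beta)$ so that $\sum_i\epsilon(\beta e^i)e_i=\beta$, both sides reduce to $\epsilon(\alpha\beta s)$ after one use of cyclicity, and non-degeneracy closes it. The treatment of relation (42) is the mirror image: its $s=1$ instance is $\sum_i (qe^i)\otimes e_i=\sum_i (qe_i)\otimes e^i$, whose $q=1$ case is precisely $\sigma_{A,A}(\gamma)=\gamma$, so relation (42) $\Rightarrow$ the fourth bullet; the same $\epsilon(\alpha\,\cdot\,)\otimes\epsilon(\beta\,\cdot\,)$ contraction (now producing $\epsilon(\alpha q\beta s)=\epsilon(\beta s\alpha q)$) shows symmetry $\Rightarrow$ relation (42). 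Chaining these with the equivalence of the last two bullets closes the loop among all four statements.

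The main obstacle I anticipate is purely bookkeeping: keeping straight which of the two dual-basis resolutions, $a=\sum_i\epsilon(ae_i)e^i$ versus $a=\sum_i\epsilon(e^i a)e_i$, applies at each contraction, since the two sides of relations (41)--(42) place the active algebra element on opposite sides of a basis vector. Symmetry of the pairing is exactly the hypothesis that licenses the swap $\epsilon(xe^i)=\epsilon(e^i x)$ needed before the resolution of the identity can be applied, so the crux of each direction is to isolate that single use of cyclicity; once it is located, everything else is forced by the bimodule identities from (39)--(40) and by non-degeneracy.
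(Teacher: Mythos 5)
Your proof is correct, and all of your translations of the string relations into Sweedler/dual-basis form check out: relation (41) is indeed $\sum (q_{(1)}s)\otimes q_{(2)}=\sum q_{(1)}\otimes(sq_{(2)})$, relation (42) is $\sum(qs_{(2)})\otimes s_{(1)}=\sum(q_{(1)}s)\otimes q_{(2)}$, and the bimodule identities $\delta(a)=\sum_i ae_i\otimes e^i=\sum_i e_i\otimes e^i a$ are exactly the $b=1$ and $c=1$ specializations of (39) and (40). The underlying ingredients are the same as the paper's --- composing with the counit to extract the (twisted) pairing for the forward direction, and the Frobenius relations to express $\delta$ through $\delta(1)$ for the converse --- but the mechanics differ in two ways worth noting. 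First, where the paper argues diagrammatically for the converse (rewriting both multiplications via equation (72), applying coassociativity and naturality of the symmetry until the two sides are identical up to a twist of the pairing, so that symmetry of the pairing gives equality directly), you contract against the separating family $\epsilon(\alpha\,\cdot)\otimes\epsilon(\beta\,\cdot)$ and reduce both sides to the scalar $\epsilon(\alpha\beta s)$; this makes the appeal to non-degeneracy explicit, and your reduction of (41) to its $q=1$ instance via $\mathrm{id}\otimes R_q$ is a clean substitute for the paper's coassociativity step. Second, you organize the equivalences differently: the paper proves (41) and (42) each equivalent to symmetry and disposes of the $\delta(1)=\sigma_{A,A}(\delta(1))$ condition by ``dual arguments,'' whereas you establish the third and fourth bullets as equivalent first and route (42) through the fourth bullet (its $q=s=1$ instance being literally $\sigma_{A,A}(\gamma)=\gamma$). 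Your version is more explicit and more elementary in the sense of being a pure coordinate computation; the paper's is basis-free. Both are complete.
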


\begin{proof}
That (41) or (42) implies that the Frobenius algebra is symmetric follows by composing both outputs with the counit, which immediately reduces the right-hand side of (41) and the left-hand side of (42) to the twisted pairing and reduces the other side of the equation to the pairing after passing the counit through the symmetry by naturality.

The converse implication for (41) follows by rewriting the multiplication on the left-hand side in the form derived from equation (40) (with string diagram the mirror image of that in the leftmost form in equation (72)) and applying coassociativity, and rewriting the multiplication on the right-hand side in the leftmost form of equation (72) then passing the comultiplication through the symmetry by naturality.  The resulting string diagrams are identical except that the latter has the pairing twisted.  Thus $A$ being symmetric implies (41) holds.  A similar argument shows the converse implication for (42).

Dual arguments show the equivalence of (41) and (42) with the last condition.
\end{proof}

The remaining equations involving only $A$, equations (53), (56), (57) and (58), readily follow from symmetry and specialness by using the identification of $A^*$ with $A$ given by using the Frobenius pairing and the coevaluation given by $\delta(1)$ as the evaluation and coevaluation maps.  For example, the reader can easily verify that once this substitution has been made, the right-hand side of (53) reduces to the multiplication by applying in sequence (and in the only way possible) associativity, the Frobenius relation (39), the defining property of the unit, symmetry, coassociativity, specialness and the defining property of the counit.

Having introduced the unit and counit, it is reasonable to insist that the action and coaction of $A$ on $B$ be unital and counital, in which case we have

\begin{proposition}
If $B$ is a Frobenius module over a symmetric Frobenius algebra $A$ with unital action and counital coaction, then equations (47), (48), (61) and (64) hold.  If, moreover $C$ has an an ``action'' and ``coaction'' on $B$ commuting with those of $A$, then (67) and (68) hold.
\end{proposition}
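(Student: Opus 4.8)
The plan is to treat each of the six equations by the same device used for their $A$-only analogues: identify $A^{*}$ with $A$ through the Frobenius pairing $(a,b)\mapsto\epsilon(ab)$, with coevaluation $\delta(1)=\sum_{i}e_{i}\otimes e^{i}$, and then collapse each right-hand composite to a structure map by applying the axioms in the forced order. The one new tool I would record at the outset is that, for a Frobenius module with unital action, the coaction is recovered from the action and the coevaluation: putting $a=1$ in the module compatibility (45) and using $x\cdot 1=x$ gives
\[
\delta_{B}(x)=\sum_{i}(x\cdot e_{i})\otimes e^{i}=(\mu_{B}\otimes\mathrm{id}_{A})\bigl(x\otimes\delta(1)\bigr),
\]
and dually (46) together with the counital hypothesis expresses the action through the coaction and the pairing, $x\cdot a=\sum_{i}x_{[1]}\,\epsilon(x_{(2)}a)$. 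These are the module versions of the facts $\delta(a)=\sum_{i}(ae_{i})\otimes e^{i}=\sum_{i}e_{i}\otimes(e^{i}a)$ that drive the $A$-only arguments.

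Equations (47) and (48) occupy, in the module $2$-$2$ block, exactly the slots that the symmetric conditions (41) and (42) occupy in the $A$ block, and their diagrams are the dotted (module) versions of those in (41), (42); accordingly I would argue as in Proposition \ref{symmetricprop}. Substituting the coaction factorization rewrites both sides of each as composites of $\mu_{B}$, $\mu$ and $\delta(1)$. Equation (47) then reduces to a sliding identity for $\delta(1)$ of the form $\sum_{i}(e_{i}a)\otimes e^{i}=\sum_{i}e_{i}\otimes(ae^{i})$, valid because $A$ is symmetric (cyclicity of $\epsilon$), followed by associativity of the action; equation (48) reduces to the symmetry of the copairing $\sum_{i}e_{i}\otimes e^{i}=\sum_{i}e^{i}\otimes e_{i}$, which is the fourth clause of Proposition \ref{symmetricprop}. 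Neither argument uses specialness.

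Equations (61) and (64) are the module analogues of the $1$-$3$ move equations for $A$: (64) is the action version of (53), and (61) is the coaction version of the comultiplication equation (57). I would imitate the reduction the text sketches for (53) verbatim, with the structure maps of $A$ replaced by those of the module. After replacing the evaluation and coevaluation strands by the pairing and $\delta(1)$, the right-hand composite of (64) is brought to $\mu_{B}$ by applying, in the only admissible order, associativity of the action, the module compatibility (45)/(46), the unit axiom through the unital action, the symmetry, coassociativity of the coaction, the special-module relation $\mu_{B}\delta_{B}=\mathrm{id}_{B}$, and finally the counit axiom through the counital coaction; the dual sequence brings the composite of (61) to $\delta_{B}$. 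This is where the unital, counital, and specialness hypotheses are consumed.

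Equations (67) and (68) are the $C$-strand versions of (61) and (64), and here the extra hypothesis enters: I would first use that the ``action'' and ``coaction'' of $C$ commute with the action and coaction of $A$ to slide the single $C$-node of each right-hand composite past all of the $A$-nodes, after which the $A$-on-$B$ part is precisely the composite appearing in (64) (resp.\ (61)) and collapses by the reduction just described, leaving the bare $m$ (resp.\ $d$). I expect the genuine difficulty to be bookkeeping rather than structural: one must track, through every symmetry, which tensorand is $A$ and which is $B$ and in what left-to-right order the inputs sit, so as to confirm that at each stage exactly one axiom applies and the chain of rewrites is forced. Once the coaction-from-action factorization and the orientation conventions are pinned down, every step above is determined, so the main work is to verify that the forced chain indeed terminates at the claimed structure map in each of the six cases.
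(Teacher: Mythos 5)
Your proposal follows essentially the same route as the paper, whose entire proof is the assertion that (47) and (48) are \emph{mutatis mutandis} the converse implications of Proposition \ref{symmetricprop} and that (61), (64), (67), (68) are \emph{mutatis mutandis} the reductions of (53) and (57) sketched just before; your write-up merely fills in the details of those transfers (the coaction-from-action factorization, where symmetry, unitality/counitality and specialness are consumed, and the use of the commutation with $C$ to reduce (67) and (68) to the $A$-only case). The one discrepancy is that the paper's prose pairs (61), (67) with (53) and (64), (68) with (57), whereas your pairing---action equations (64), (68) with the multiplication equation (53), coaction equations (61), (67) with the comultiplication equation (57)---is the structurally correct one, so this is a point in your favor rather than a gap.
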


\begin{proof}
The proof of (47) and (48) is {\em mutatis mutandis} the same as that for the converses of Proposition \ref{symmetricprop}, while the proofs of (61) and (67) (resp. (64) and (68)) is {\em mutatis mutandis} that of (53) (resp. (57)) in the remark above.
\end{proof}

We have thus shown 

\begin{theorem}\label{precursor}

If $A$ is a special symmetric Frobenius algebra, $B$ a special (right) Frobenius module over $A$, and $C$  finite dimensional vector space equipped with a left ``action'' and ``coaction'' on $B$ commuting with the action and coaction of $A$, and moreover satisfying the equation given in string notion by (71), then using bases for $A$, $B$ and $C$ as colors and letting $a, \bar{a}, b, \bar{b}, c, \bar{c}$ be the structure coefficients for the multiplication, comultiplication, action of $A$ on $B$, coaction of $A$ on $B$, ``action'' of $C$ on $B$ and ``coaction'' of $C$ on $B$, respectively, the unnormalized state-sum associated to colorings of a triangulation $\mathcal T$ of a curve-surface pair $\Gamma \subset \Sigma$, $Z(\Gamma \subset \Sigma, {\mathcal T})$ is independent of the triangulation, and thus a topological invariant of the curve-surface pair.

\end{theorem}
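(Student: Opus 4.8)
The plan is to reduce triangulation-independence to the \emph{local} invariance of the state sum under each of the seven flaglike (extended) Pachner moves, and then to recognize each such local invariance as one of the equations already verified under the stated hypotheses. By the result of \cite{DPY} recalled at the opening of Section 4, any two flaglike triangulations of PL-homeomorphic curve-surface pairs (with orderings of their curve-complementary vertices) are connected by a finite sequence of flaglike Pachner moves together with reorderings of $\mathcal{T}^0_0$; hence it suffices to prove that $Z(\Gamma\subset\Sigma,\mathcal{T})$ is left unchanged by each individual move and by each transposition of the chosen ordering.

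First I would make the local-to-global reduction precise. The summand of \eqref{unnormalized} is a product of coefficient-function values, one factor per $2$-simplex, and a Pachner move replaces the triangles inside a fixed disk by a different triangulation of that disk while fixing the coloring on its boundary. Factoring the state sum as (boundary data) times (a sum over the colors of the edges interior to the disk), the equality of the two state sums reduces, for each admissible assignment of orientations to the unoriented edges, to a single scalar identity among the coefficient functions. These identities are exactly equations (2)--(36), equivalently the string-diagram equations (37)--(71).

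Next I would dispatch the 2-2 moves and the extended move by citing the structural facts assembled above. Associativity and coassociativity of $A$ give the equations expressed in string form by (37)--(38); the Frobenius relations (39)--(40) and Proposition~\ref{symmetricprop} (symmetry) give (41)--(42) and, together with the identification of $A^*$ with $A$ via the Frobenius pairing and the coevaluation $\delta(1)$, the remaining $A$-only equations; the module and comodule axioms and Frobenius compatibilities defining a special Frobenius $A$-module $B$ give the $B$-equations, with the unital action and counital coaction hypotheses supplying those involving the unit and counit; and the hypothesis that the ``action'' and ``coaction'' of $C$ commute with those of $A$, together with the Frobenius compatibility of $C$ with $A$, handles the $C$-equations. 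The sole equation coming from the 2-4 extended move is (71), which is assumed outright.

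The step I expect to be the main obstacle is precisely the family of moves that change the number of vertices --- the 1-3 Pachner moves and the 2-4 extended move --- since for a general Frobenius algebra these would force a normalization factor depending on the vertex count, as flagged in the introduction. The key point to make precise is that \emph{specialness} is exactly what collapses these factors to unity: the extra vertex created by such a move contributes a ``bubble'' whose local value is the composite $\mu\circ\delta$ off the curve (resp.\ $\mu_B\circ\delta_B$ on it), and the special hypotheses set these equal to $\mathrm{Id}_A$ (resp.\ $\mathrm{Id}_B$). Thus under our assumptions the vertex-adding moves already preserve the \emph{unnormalized} $Z$, with no normalization required. Finally, invariance under reordering $\mathcal{T}^0_0$ is obtained the same way: a transposition only reverses the orientation of edges off the curve, interchanging barred and unbarred coefficient functions, and the resulting identities are again among (2)--(36) and follow from the symmetry of the pairing via Proposition~\ref{symmetricprop}. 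Assembling all cases shows $Z(\Gamma\subset\Sigma,\mathcal{T})$ is independent of both the triangulation and the ordering, hence a topological invariant of $\Gamma\subset\Sigma$.
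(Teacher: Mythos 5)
Your proposal follows essentially the same route as the paper: the theorem there is presented as the culmination ("We have thus shown") of exactly this argument --- reduce triangulation-independence via the flaglike-move result of \cite{DPY} to the 35 scalar identities (2)--(36), recast them as the string-diagram equations (37)--(71), and check that associativity/coassociativity, the Frobenius relations, symmetry, the (special) module structure, the commuting $C$-structure, and the assumed equation (71) supply all of them, with specialness accounting for the loop created by the vertex-adding moves. Your explicit treatment of the reordering of ${\mathcal T}^0_0$ is a welcome clarification of a point the paper leaves implicit, but it does not change the approach.
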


The somewhat mysterious equation (71) simplifies if we posit more structure on $C$.  It is easy to see the following holds:

\begin{proposition}

If $C$ is a special Frobenius algebra, and its ``action'' and ``coaction'' on $B$, are in fact a left algebra action and a left coalgebra coaction, then equation (71) holds.

\end{proposition}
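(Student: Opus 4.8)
The plan is to read equation (71) as the assertion that its two sides denote the same morphism $B\otimes B\to B\otimes B$, and to reduce the right-hand diagram to the left-hand one using only three facts supplied by the hypotheses: associativity of the left action ${\sf m}\colon C\otimes B\to B$, coassociativity of the left coaction ${\sf d}\colon B\to C\otimes B$, and the specialness relation $\mu_C\circ\delta_C=\mathrm{id}_C$ for the Frobenius algebra $C$, where $\mu_C$ and $\delta_C$ denote the multiplication and comultiplication of $C$. Reading top to bottom, the right-hand side applies ${\sf d}$ twice in succession to the $B$-strand entering from the left input, producing two $C$-strands together with one surviving $B$-strand, and then feeds the two $C$-strands, innermost first, into two nested applications of ${\sf m}$ acting on the remaining $B$-input; the left-hand side applies ${\sf d}$ once and ${\sf m}$ once.

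First I would use associativity of the action to fuse the two ${\sf m}$-nodes: the nested composite ${\sf m}\circ(1_C\otimes {\sf m})$ on the two $C$-legs and the free $B$-input equals ${\sf m}\circ(\mu_C\otimes 1_B)$, so the two incoming $C$-strands are first combined by $\mu_C$ and only then act on $B$. Next I would invoke coassociativity of the coaction, $(1_C\otimes {\sf d})\circ {\sf d}=(\delta_C\otimes 1_B)\circ {\sf d}$, to recognize the two $C$-strands produced by the iterated coaction as the image under $\delta_C$ of the single $C$-output of one coaction, while the surviving $B$-strand is exactly that coaction's $B$-output. At this point the $C$-portion of the diagram has become $\mu_C\circ\delta_C$ applied to a single $C$-leg, which specialness collapses to $1_C$. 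What remains is precisely one coaction followed by one action on the retained $B$-input, together with the untouched surviving $B$-output, which is exactly the left-hand diagram of (71).

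The bulk of the work, and the only place where care is needed, is the bookkeeping at the nodes: one must track which of the two legs emerging from each trivalent vertex is the $C$-strand and which is the $B$-strand, and verify that the cyclic ordering forced by the orientation conventions feeds $C$ into the left input and $B$ into the right input of each ${\sf m}$, so that the left-action form of associativity (rather than a twisted variant) applies, and likewise that the iterated coaction is the $(1_C\otimes {\sf d})\circ {\sf d}$ composite rather than $({\sf d}\otimes 1_C)\circ {\sf d}$. Because the right-hand diagram is planar, with both $C$-strands running to the left and the free $B$-input entering from the upper right without crossing anything, no symmetry (twist) is invoked, so the reduction is entirely \emph{in place} and the three rewrites may be applied in the stated order without further manipulation. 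I do not expect any genuine obstacle beyond this verification; in particular neither the unit nor the counit of $C$ is needed, only the multiplication, comultiplication, and specialness of the Frobenius algebra $C$, which is exactly what the hypothesis provides.
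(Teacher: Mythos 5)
Your proof is correct and is precisely the argument the paper leaves implicit (the proposition is stated with only the remark that it is ``easy to see''): associativity of the action fuses the nested ${\sf m}$'s into ${\sf m}\circ(\mu_C\otimes 1_B)$, coassociativity of the coaction rewrites the iterated ${\sf d}$ as $(\delta_C\otimes 1_B)\circ {\sf d}$, and specialness collapses $\mu_C\circ\delta_C$ to the identity, reducing the right-hand side of (71) to the left. One tiny correction: both sides of (71) do contain a crossing (the surviving $B$-strand from the coaction passes the incoming $B$-strand being acted upon), so the diagrams are not literally planar as you claim; but since that crossing appears identically on both sides and is untouched by your three rewrites, the substance of your argument is unaffected.
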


In practice, it is easier to find state-sum invariants by allowing a slightly less restrictive condition and normalizing the state-sum.  Observe that equations (53) to (70) all arise from Alexander subdivisions which introduce a new vertex off the curve, while (71) arises from an Alexander subdivision introducing a vertex on the curve -- in each case the string diagram in which there is a non-trivial cycle in the underlying graph (remembering that the string diagram for the symmetry should not be though of as a self-intersection) corresponding to the subdivided state with the extra vertex.

If we relax the definition of special Frobenius algebra to make

\begin{definition}
A Frobenius algebra $A, \mu, \delta, 1, \epsilon$ over $k$ is {\em projectively special} if there exists a non-zero
scalar $\rho \in k$ such that $\delta(\mu) = \rho Id_A$.

We call $\rho$ the {\em loop constant} of $A$.

A Frobenius module $B, \mu_B, \delta_B$ over a projectively special Frobenius algebra with loop constant $\rho$ is {\em projectively special} if the action and coaction satisfy $\delta_B(\mu_B) = \rho Id_B$.
\end{definition}

\noindent The discussion above proves our main result:

\begin{theorem}\label{main}

If $A$ is a projectively special symmetric Frobenius algebra with loop constant $\rho$, $B$ a projectively special (right) Frobenius module over $A$, and $C$ a projectively special Frobenius algebra with loop constant $\lambda$ with a left action and coaction on $B$ commuting with the action and coaction of $A$, then using bases for $A$, $B$ and $C$ as colors and letting $a, \bar{a}, b, \bar{b}, c, \bar{c}$ be the structure coefficients for the multiplication, comultiplication, action of $A$ on $B$, coaction of $A$ on $B$, action of $C$ on $B$ and coaction of $C$ on $B$, respectively, then the quantity

\[  \rho^{-|{\mathcal T}_0^0|} \lambda^{-|{\mathcal T}_0^1|} Z(\Gamma \subset \Sigma, {\mathcal T}) \]

\noindent is independent of the triangulation, and thus a topological invariant of the curve-surface pair.

\end{theorem}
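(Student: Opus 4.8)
The plan is to reduce the statement to local invariance under the seven flaglike (extended) Pachner moves. By the results of Crane--Yetter \cite{CY} and Dougherty--Park--Yetter \cite{DPY} invoked above, any two flaglike triangulations of $\Gamma \subset \Sigma$ are joined by a finite sequence of flaglike Pachner moves, so it suffices to show that $\rho^{-|{\mathcal T}_0^0|}\lambda^{-|{\mathcal T}_0^1|}Z(\Gamma \subset \Sigma,{\mathcal T})$ is unchanged by a single move of each type. I would sort the seven moves by their effect on the two vertex counts: the three 2-2 moves fix both $|{\mathcal T}_0^0|$ and $|{\mathcal T}_0^1|$; each of the three 1-3 moves creates exactly one new vertex off the curve, raising $|{\mathcal T}_0^0|$ by one and fixing $|{\mathcal T}_0^1|$; and the 2-4 extended move creates exactly one new vertex on the curve, raising $|{\mathcal T}_0^1|$ by one and fixing $|{\mathcal T}_0^0|$. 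Independence from the chosen ordering of ${\mathcal T}_0^0$ I would treat separately, noting that reversing the induced orientation of an off-curve edge corresponds, under the identification of $A$ with $A^*$ by the symmetric Frobenius pairing, to the symmetry of $A$ established in Proposition \ref{symmetricprop}; this is exactly the content of the orientation-variant equations, so all admissible orderings produce the same value of $Z$.

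For the three 2-2 moves, the normalization prefactor is identical on the two sides because neither vertex count changes, so invariance of the normalized quantity is equivalent to invariance of the bare state-sum $Z$, i.e. to the scalar equations (2)--(17), equivalently the string-diagram equations (37)--(52). These equations express only the associativity, coassociativity and Frobenius relations of $A$, its symmetry, the action and coaction axioms for the module $B$ together with their Frobenius compatibilities, and the commutation of the ``action'' and ``coaction'' of $C$ with those of $A$. None of these involves the loop constant, so each holds verbatim once $A$ and $B$ are merely projectively special; hence 2-2 invariance holds exactly as in Theorem \ref{precursor}.

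The substance lies in the 1-3 and 2-4 moves, whose subdivided sides carry the single non-trivial cycle remarked upon above. Here I would show that the weight of the subdivided configuration, summed over the labels of the newly introduced interior edges, equals $\rho$ (for a 1-3 move) or $\lambda$ (for the 2-4 move) times the weight of the unsubdivided configuration. The reduction is precisely the one used in the special case -- associativity, the Frobenius relation, the unit and counit, the symmetry, and coassociativity, carried out in the unique admissible order -- except that the one step contracting the loop is now the projective relation rather than specialness: $\delta(\mu)=\rho\,\mathrm{Id}_A$ for the off-curve move touching only $A$, $\delta_B(\mu_B)=\rho\,\mathrm{Id}_B$ for the off-curve moves touching $B$ (the same $\rho$, by the definition of a projectively special Frobenius module), and the projective specialness of $C$ contracting its own loop to $\lambda\,\mathrm{Id}_C$ for the on-curve move. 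Consequently every 1-3 move multiplies $Z$ by $\rho$ while raising $|{\mathcal T}_0^0|$ by one, and the 2-4 move multiplies $Z$ by $\lambda$ while raising $|{\mathcal T}_0^1|$ by one, so in each case the prefactor $\rho^{-|{\mathcal T}_0^0|}\lambda^{-|{\mathcal T}_0^1|}$ absorbs the extra scalar and the normalized quantity is preserved.

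I expect the principal obstacle to be bookkeeping rather than conceptual. One must verify, across all eighteen 1-3 equations (53)--(70), the 2-4 equation (71), and every feasible orientation of their unlabeled edges, that the loop-contraction step is invoked exactly once in each reduction -- never zero times and never twice -- so that precisely one factor of $\rho$ or $\lambda$ is produced per new vertex. This rests on each subdivided diagram containing exactly one non-trivial cycle and on the reduction proceeding ``in the only way possible,'' as observed for equation (53); confirming that this uniqueness persists for the $B$- and $C$-decorated variants, and in particular that the loop of the 2-4 move closes through the projectively special Frobenius structure of $C$ (using that its ``action'' and ``coaction'' are a genuine algebra action and coalgebra coaction) to yield exactly $\lambda$, is the one place demanding real case analysis.
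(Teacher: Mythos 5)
Your proposal is correct and follows essentially the same route as the paper, which reduces invariance to the seven flaglike (extended) Pachner moves, establishes the unnormalized invariance of Theorem \ref{precursor} from the 2-2 equations, and then observes that each 1-3 (resp.\ 2-4) move contributes exactly one factor of $\rho$ (resp.\ $\lambda$) through the projective specialness relation, absorbed by the vertex-count normalization. Your explicit attention to the bookkeeping---that the loop contraction is invoked exactly once per new vertex---is a point the paper leaves implicit, but it is not a different argument.
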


Note that $B$ need not be projectively special as a left Frobenius module over $C$.

The previous theorem seems to give the construction at the ``right'' level of generality, in the sense that the initial data required is readily available.  It is, of course possible to state a more general result in which $C$ is still just a vector space with an ``action'' and ``coaction'' as in the previous theorem satisfying a projective version of equation (71), and a yet more general version of the construction if Conjecture \ref{onlyFrob} were to prove false.

It follows from generalities in \cite{Y} that the invariants thus obtained are the values on closed curve-surface pairs of a 2-dimensional topological quantum field theory with codimension one defects in the sense of Fuchs, et al. \cite{Fu}.

\section{Examples of Initial Data}

Our first two examples recapitulate the work of Dougherty, Park and Yetter \cite{DPY} in the context of our general setting

\begin{example}  Let $G$ and $H$ be finite groups, $X$ a set equipped with a left action of $H$ and a right action of $G$. Let $A = K[G]$, the group algebra of $G$, $C = K[H]$ and $B = Span(X)$,  the multiplications of the group algebras and their actions on $B$, together with comultiplications and coactions given on basis elements by sums over factorizations

\[ \Delta(x) = \sum_{yz = x} y\otimes z \]

\noindent for the comultiplication on $A$, the argument and the factors in the summands all lie in $G$, for the comultiplication on $C$ the argument and the factors in the summands all lie in $H$, for the coaction of $A$ on $B$, $x,y \in X$ and $z\in G$, and for the coaction of $C$ on $B$, $x,z \in X$ and $y \in H$, and the null infix denoted the group law or the action as appropriate, and counits for $A$ and $C$ given by the coefficient of the identity element.  Then $A$, $B$, $C$ and their (co)multiplications and (co)actions satisfy the hypotheses of Theorem \ref{main} with 
$A$ having loop constant $|G|$ and $B$ having loop constant $|H|$.

\end{example}

\begin{example}
Let $G$ and $H$ be finite groups, $X$ a set equipped with a left action of $H$ and a right action of $G$, and let $(\alpha, \beta, \gamma)$ be a generalized 2-cocycle in the sense of \cite{DPY} Theorem 4.1.  Then the vector spaces $A = Span(G)$, $B = Span(X)$, and $C = Span(H)$ with the operations given on basis elements by

\[ \mu(g\otimes f) = \alpha(g,f) gf \]

\[ \delta(g) = \sum_{\ell m = g} \alpha^{-1}(\ell, m) \ell\otimes m \]

\[ \mu_B(x \otimes g) = \beta(x,g) x\cdot g \]

\[ \delta_B(x) = \sum_{y\cdot g = x} \beta(y,g)^{-1} y\otimes g \]

\[ m(h\otimes x) = \gamma(h, x) h\cdot x \]

\[ d(x) = \sum_{h\cdot y = x} \gamma^{-1}(h,y) h\otimes y \]

\noindent satsify the hypotheses of Theorem \ref{main} with 
$A$ having loop constant $|G|$ and $C$ having loop constant $|H|$.

\end{example}

\begin{example} Let $A = Mat_{n\times n}(K)$, $B = Mat_{m\times n}(K)$ and $C = Mat_{m\times m}(K)$, with the multiplication $\mu$ on $A$, the right action $\mu_B$ of $A$ on $B$ and the left action $m$ of $C$ on $B$ all given by matrix multiplication, and the comultiplication $\delta$ on $A$, right coaction $\delta_B$ of $A$ on $B$, and the left coaction of $C$ on $B$ given on the bases of elementary matrices by

\[ E_{ij} \mapsto \sum_k E_{ik}\otimes E_{kj} ,\]

\noindent where the summand $k$ ranges from 1 to $n$ in the first two cases and from 1 to $m$ in the last and in each case the argument and the tensorands of the value are interpreted as lying in the appropriate vector spaces of matrices.   Then $A$, $B$, $C$ and their (co)multiplications and (co)actions satisfy the hypotheses of Theorem \ref{main} with 
$A$ having loop constant $n$ and $C$ having loop constant $m$.
\end{example}

Finally, a general class of examples in which some of the structure is trivial:

\begin{example}

Let $A$ be any projectively special symmetric Frobenius algebra with loop constant $\rho$, $B$ any projectively special Frobenius module over it (as, for example, $A$ with (co)action given by the (co)multiplication), and $C = k$ with scalar multiplication and its inverse as action and coaction on $B$. Then  $A$, $B$, $C$ and their (co)multiplications and (co)actions satisfy the hypotheses of Theorem \ref{main} with 
$A$ having loop constant $\rho$ and $C$ having loop constant $1$.

\end{example}

\end{document}